\newcommand {\half} {\frac 12}
\def\ml{l\kern-0.035cm\char39\kern-0.03cm}
\newcommand{\vct}[1]{\pmb{#1}}
\newcommand {\vphi } {\varphi}
\newcommand {\domain} {\Omega}
\newcommand {\bndry} {\Gamma}
\newcommand {\normal} {\vct n}
\newcommand {\RR } {{\mathbb R}}
\newcommand {\Ldva} {L^2(\domain)}
\newcommand{\BV}{BV(\domain)}
\newcommand {\norm}[2] {\left\|#1\right\|_{#2}}
\newcommand {\norma}[1] {\left\|#1\right\|}
\newcommand {\spacedef}[2] {\{#1\ |\ #2\}}
\newcommand {\dual}[2] {\left<#1,#2\right>}
\newcommand {\scal}[2]{\left(#1,#2\right)}
\def\refe#1{(\ref{#1})}
\newcommand {\td} {\mathcal{T}}
\newcommand {\sldva} {\sigma_{L^2}}
\newcommand {\spc} {\sigma_{PC}}
\newcommand{\reg}{\mathcal{R}}
\def\ruw{\R_{U,W}}
\newcommand {\dint}[1] {\,\mathrm{d}#1}
\newcommand {\dx} {\dint{x}}
\newcommand {\dtheta} {\dint{\theta}}
\newcommand {\dS} {\dint{S}}
\newcommand{\muinv}{{\mu^{-1}}}
\def\smin{\sigma_{\mathrm{min}}}
\newcommand{\uoa}{u_{\alpha}^{\delta}}
\newcommand{\woab}{w_{\alpha,\beta}^{\delta}}
\newcommand{\uoab}{u_{\alpha,\beta}^{\delta}}
\newcommand{\R}{\mathcal{R}}
\newcommand{\D}{\mathcal{D}}
\newcommand{\vd}{v^{\delta}}
\newcommand{\wl}{z}
\def\ud{\mathrm{d}}
\def\tab{\mathcal{T}_{\alpha,\beta}}
\def\Ta{\mathcal{T}_{\alpha}}
\def\F{F}
\newcommand{\im}{\Lambda}
\def\fid{\mathcal{F}} 
\newcommand {\dvr } {\nabla \cdot }
\newcommand {\rot } {\nabla \times }
\newcommand{\ru}{\mathcal{R}_U}
\newcommand{\rw}{\mathcal{R}_W}
\newcommand{\pu}{\mathcal{P}}
\newcommand{\udagl}{u^\dag_{\lambda}}
\newcommand{\wdagl}{w^\dag_{\lambda}}
\newtheorem{problem}{Problem}[section]
\newtheorem{theorem}{Theorem}[section]
\newtheorem{corollary}{Corollary}[section]
\newtheorem{lemma}{Lemma}[section]
\newtheorem{remark}{Remark}[section]
\begin{document}
\title[On a continuation approach in Tikhonov regularization]{On a continuation
approach in Tikhonov regularization and its application in piecewise-constant
parameter identification}
\author{V Melicher and V Vrábeľ}
\address{Research Group for Numerical Analysis and Mathematical
Modelling, Department of Mathematical Analysis, Galglaan 2, 9000 Gent, Belgium}
\eads{\mailto{Valdemar.Melicher@UGent.be}, \mailto{Vladimir.Vrabel@UGent.be}} 

\begin{abstract}
We present a new approach to convexification of the Tikhonov regularization
using a continuation method
strategy. We embed the original minimization problem into a one-parameter family
of minimization problems. Both the penalty term and the minimizer of the
Tikhonov functional become dependent on a continuation parameter. 

In this way we can independently treat two main roles of the regularization
term, which are stabilization of the ill-posed problem and introduction of the a
priori knowledge. For zero continuation
parameter we solve a relaxed regularization problem, which stabilizes the
ill-posed problem in a weaker sense. The problem is recast to the original
minimization by the continuation method and so the a priori knowledge is
enforced.

We apply this approach in the context of topology-to-shape geometry
identification, where it allows to avoid the convergence of
gradient-based methods to a local minima. We present illustrative results for
magnetic induction tomography which is an example of PDE constrained inverse
problem.
\end{abstract}
\date{}
\maketitle

\noindent{\it Keywords\/}: continuation, PDEs, variational problems,
optimization,
inverse problems, level set method, magnetic induction tomography
\\
\noindent{\it MSC 2010\/}: 35R30, 65N20, 78M30

\thispagestyle{empty}


\section{Introduction}\label{sec:introduction}
In this paper we propose and study a \emph{continuation-based approach}
for the Tikhonov regularization of \emph{ill-posed} problems.

We consider ill-posed problems that can be written in the form
of an operator equation
\begin{equation}\label{eq:general_operator_eq}
\F u = v,
\end{equation}
where $\F:\D(\F)\subseteq U \to V$ is a (in general non-linear) forward
operator, mapping between Banach spaces $U$ and $V$. By $v$ we understand
certain
exact measurements projected on $V$. We assume that only noisy data $v^\delta$
are available, such that $\norm{v-v^\delta}{V}\le\delta,$ where $\delta$ is the
level of noise.

Let us introduce a suitable \emph{regularization} $\R: U \to [0,+\infty]$ with
the domain
$\D(\R):=\spacedef{u\in U}{\R(u)\neq+\infty}.$ It is a proper and convex
functional. The general convention is to
consider only those solutions $u$ to ill-posed operator equation
\refe{eq:general_operator_eq}, where $\R(u)$ is sufficiently small. An
element
$u^\dag$ is called an $\R-$minimizing solution (e.g.\cite{Hofmann2007})
if 
\begin{equation}\label{def:r_min_solution}
\R(u^\dag) = \min\spacedef{\R(u)}{\F u = v} < \infty.
\end{equation}

We follow the classical Tikhonov idea \cite{Engl1996,Morozov1984} and consider
minimizers of functional 
\begin{equation}\label{eq:tikhonov_functional}
 \Ta(u) := \norm{\F(u)-\vd}{V}^2 + \alpha \R(u)
\end{equation}
for a suitable regularization parameter $\alpha>0$,
which depends on both noise level and data, i.e. $\alpha(\delta, \vd).$
The first term in \refe{eq:tikhonov_functional} is called the \emph{fidelity
functional (term)}. It ensures that minima of the Tikhonov functional are
approximate solutions of the operator equation \refe{eq:general_operator_eq},
i.e. the problem which we want to solve in the first place. The
regularization term $\R(u)$ stabilizes the ill-posed problem with respect to
the noise and represent a priori assumptions or expectations that we have about
a desired solution. It practically always enforces the membership of $u$ in a
certain
$U.$ As usual, we denote a minimizer of
\refe{eq:tikhonov_functional} as
\begin{equation}
\uoa := \mathop{\rm{argmin}}\limits_{u\in U}\Ta(u).
\end{equation}
It is a well known fact that under certain reasonable assumptions $\uoa$ are
stable approximations of an $\R-$minimizing solution to
\refe{eq:general_operator_eq}, also in a rather general Banach space setting
\cite{Hofmann2007}. The resulting problem of regularization can be
roughly stated as follows:
\begin{problem}\label{prob:1}
Find a suitable $\alpha$ and the corresponding minimizer $\uoa$ of the Tikhonov
functional 
\refe{eq:tikhonov_functional}, such that $\uoa$ approximates $u^\dag$ as close
as possible.
\end{problem}

The main goal of this paper is to construct a sequence
converging to the \emph{global minimizer} $\uoa$. The biggest challenge is how
to avoid the convergence of a numerical minimization method to a local minimum
of \refe{eq:tikhonov_functional}, which is a common problem for standard
\emph{gradient-based minimization methods} (GBMM). 

The possible reasons for the existence of local minima of
\refe{eq:tikhonov_functional} are triadic: the forward operator $\F$ itself, the
noise in the measurements and the penalty term $\R(u).$ The forward operator is
case-specific and the noise is inherent to ill-posed problems. We have however
full freedom of choice of regularization.

When a GBMM is applied to \refe{eq:tikhonov_functional}, the whole resulting
minimizing sequence belongs to $U$. This is enforced by the regularization
$\R(u).$ However the underlying direct problem \refe{eq:general_operator_eq}
generally requires a far less regularity of a solution than it is asked by
$\R(u).$ Even if we expect our final solution to belong to $U$, it is not
necessary to consider only minimizing sequences from $U.$ This restriction is
often the reason that a GBMM converges to a local minimum.

Let us recall that the purpose of adding the regularization is to stabilize
the ill-posed problem and to ensure the desired properties of the solution. The
main idea of the article is to provide these two roles of the regularization
term gradually.


\subsection{Continuation immersion approach}

Let us consider a Banach space $W,$ such that $U$ is a proper subset of $W$
and the problem \refe{eq:general_operator_eq} is well defined in $W$,
i.e. $U\subsetneq
W$ and $\D(F)\cap W \neq \emptyset.$ We can introduce a new Tikhonov functional
analogical to
\refe{eq:tikhonov_functional}
\begin{equation}\label{eq:AuxilFunc}
    \td_\beta(w) := \norma{F(w)-v^\delta}^2 + \beta \reg_W(w),
\end{equation}
with a regularization term $\rw: W \to [0,+\infty]$ and regularization parameter
$\beta>0$.
It is again a convex and proper functional with the domain
$\D(\R_W):=\spacedef{w\in
W}{\R_W(w)\neq+\infty}.$

The main idea is to continuously transform the relaxed functional $\td_\beta$ to
the original $\Ta$ together with the corresponding minimization problems
by making use of the continuation method \cite{Allgower2003}. 
We will stabilize the problem \refe{eq:general_operator_eq} using $W$-based
regularization, i.e. in a ``broader'' sense. Since the Tikhonov regularization
\refe{eq:AuxilFunc} in $W$ is a ``less'' constrained problem  than
\refe{eq:tikhonov_functional}, it will be easier solvable. It will
provide a very good starting point for minimization in $U.$ The
extra desired properties will be progressively imposed on the solution via
continuation-based projection a posteriori. We consider a one-parameter family
of the Tikhonov functionals
\begin{equation}\label{eq:tikhonov_functional_with_continuation}
    \tab(u,w,\lambda) = \norma{F(\wl)-v^\delta}^2 
    + \lambda\alpha \reg_U(u) + (1-\lambda)\beta\reg_W(w),
\end{equation}
where $\lambda \in [0,1]$ and 
\begin{equation}\label{eq:wl}
\wl = \lambda u + (1-\lambda)w.  
\end{equation}
The regularization term $\ru$ stands for the original regularization in
\refe{eq:tikhonov_functional}. The regularizations parameters $\alpha$ and
$\beta$ are in general functions of $\delta$, $\vd$.

The forward problem $\F$ corresponding to
\refe{eq:tikhonov_functional_with_continuation} can 
be understood as acting on the parametrized family $\wl \in W.$ The
regularization part
\begin{equation}\label{eq:reg_func_continuation}
\ruw(u, w, \lambda) := \lambda\alpha\R_U(u) + (1-\lambda)\beta\R_W(w) 
\end{equation}
is better to be understood as a function on $U\times W.$

We consequently deal with a one-parameter family of
minimization problems
\footnote{If $\alpha \equiv \beta$, the above formulas might bring the augmented
Lagrangian method to mind.
Among the differences between these two method, we stress that we minimize here
in the two independent variables $u$ and $v$.
This turns out very convenient, mainly from the numerical point of view, as we will
show later.}.
For $\lambda \in (0, 1)$ we look for a couple from
$U\times W$,
which minimizes the functional \refe{eq:tikhonov_functional_with_continuation}, 
that is
\begin{equation*}
    (\uoab(\lambda), \woab(\lambda)) 
    = \mathop{\rm{argmin}}\limits_{(u,w) \in U\times W} \tab(u, w, \lambda).
\end{equation*}
For $\lambda=1$ we get the original minimization problem of $\Ta$ and for
$\lambda=0$ the the problem reduces to the minimization of \refe{eq:AuxilFunc}.
By abuse of notation we sometimes write that $(u,w)$ is a minimizer of $\tab$
for any $\lambda \in [0,1]$ to denote a minimizing couple $(u,w)\in U\times W$
if $\lambda \in (0,1)$ and also to denote a minimizing element $w\in W$ if
$\lambda=0$ or  $u\in U$ if $\lambda=1$.

Analogically to the notion of the $R$-minimizing solution
\refe{def:r_min_solution}, let us define for each given $\lambda\in[0,1]$ an
\emph{$\ruw$-minimizing solution} as a couple $(\udagl, \wdagl)\in U\times W$,
such that 
\begin{displaymath}\label{eq:reguw}
\ruw(\udagl, \wdagl, \lambda) = \min\{\R_{U, W}(u, w, \lambda) : \F(z) = v\}
< \infty.
\end{displaymath}

The article is organized as follows. In Section \ref{sec:framework} we analyze
the continuation approach in general. In Section \ref{sec:BM} we deal with
piecewise-constant parameter identification problems (PCPIPs), which have been
our motivation to study continuation methods in the context of Tikhonov
regularization. 
We review the relevant state of the art in PCPIPs. Then, we introduce topology-to-shape
continuation method (TSCM). In Section \ref{sec:MIT} we apply the TSCM to
magnetic induction tomography (MIT), which has many applications, e.g. in
biomedical imaging and non-destructive testing of materials. Section
\ref{sec:tscm_implementation} contains the implementation of the TSCM
and several numerical experiments for MIT are presented in \ref{sec:numer_exp}.

\section{Continuation approach for Tikhonov regularization}
\label{sec:framework}

This section deals with theoretical aspects of the continuation approach for
Tikhonov regularization. The functional $\tab$ defined by
\refe{eq:tikhonov_functional_with_continuation}
is always minimized with respect to the variables $(u, w)$ and the variable
$\lambda \in [0,1]$ is taken as a fixed parameter
\begin{equation}\label{minimization_problem}
    \tab(u, w, \lambda)
    \to\min, \quad\lambda u +(1-\lambda)w=z \in \D(F).
\end{equation}

Throughout the section we make the following assumptions:

\begin{enumerate}
    \item[(A1)]
	Let $V$ be a Hilbert space
	and  $W$ be a reflexive Banach space.
	The space $U$ is a closed reflexive proper subspace of $W$, $U
\subsetneq W$.
    \item[(A2)] 
	$F:\D(F)\subseteq W \to V$, where $\D(F)$ is closed and convex, and
$\D:=\D(F)\cap U \neq \emptyset$.
	The map $F$ is {\it strongly continuous}, i.e.
	\begin{equation}
	w_n\rightharpoonup w \quad \mbox{ implies }\quad F(w_n)\to F(w).
	\end{equation}
	It is furthermore a $C^1$-map.
    \item[(A3)]
	$\rw:W\to[0,\infty)$ is a $C^2$-map. 
	It holds that $\rw(0)=0, \rw'(0)=0$ and
	the second derivative $\rw''$ satisfies the condition
	\begin{equation*}
	\dual{\rw''(w)h}{h}_{W^*}\geq C\norm{h}{W}^2 
	\end{equation*}
	for any $w,h \in W$, where $C$ is a positive constant.
    \item[(A4)]
	$\ru:U\to[0,\infty)$ is a $C^2$-map.
	It holds  that $\ru(u)\geq\rw(u)$ for any $u\in U$,
	$\ru(0)=0, \ru'(0)=0$ and
	the second derivative $\ru$ satisfies the condition
	\begin{equation*}
	\dual{\ru''(u)h}{h}_{U^*}\geq C\norm{h}{U}^2 
	\end{equation*}
	for any $u,h \in U$, where $C$ is a positive constant.
\end{enumerate}

Under the assumption (A1) and (A2) the strongly continuous operator $F$ is
moreover {\it completely continuous}, i.e. compact and continuous.
This makes the problem \refe{eq:general_operator_eq} ill-posed
(compare with \cite[Theorem 10.1]{Engl1996}). The assumptions (A3) and (A4)
imply that 
the regularizations $\ru$ and $\rw$ are convex proper functionals.

The assertion below provides a classical result 
about the existence of a minimizer of
\refe{eq:tikhonov_functional_with_continuation} and its characterization.


\begin{lemma}[well-posedness]\label{lm:well-possedness}
Assume (A1)-(A4).
Let $\lambda \in [0,1]$ be arbitrary.
Then there exists a minimizer of $\tab$ 
for any $\alpha\geq 0$, $\beta\geq 0$,
which moreover satisfies the necessary condition
\begin{equation}\label{eq:MinCondition}
    \eqalign{
    D_u \tab(u,w,\lambda) &= 0, \\ 
    D_w \tab(u,w,\lambda) &= 0.
    }
\end{equation}
If $\alpha$ and $\beta$ are large enough,
then a critical point of $\tab$ is a local minimizer,
i.e. the condition \refe{eq:MinCondition} is sufficient for a local
minimum.
\end{lemma}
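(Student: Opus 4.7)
The plan is threefold: secure existence by the direct method of the calculus of variations, obtain the necessary condition from Fermat's principle, and establish sufficiency by showing that for large $\alpha,\beta$ the uniform convexity contributed by the regularization dominates the possible non-convexity of the fidelity term near a critical point.

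For existence, I would first extract coercivity from (A3)--(A4): integrating $\dual{\R_W''(w)h}{h}_{W^*}\ge C\norm{h}{W}^2$ twice with $\R_W(0)=0$ and $\R_W'(0)=0$ yields $\R_W(w)\ge\frac{C}{2}\norm{w}{W}^2$, and likewise $\R_U(u)\ge\frac{C}{2}\norm{u}{U}^2$. Hence for $\lambda\in(0,1)$ and $\alpha,\beta>0$ a minimizing sequence $(u_n,w_n)$ is bounded in $U\times W$; by reflexivity (A1) I pass to weak subsequential limits $u_\star\in U$, $w_\star\in W$. Since $U$ is closed (hence weakly closed) in $W$, the combination $z_n=\lambda u_n+(1-\lambda)w_n$ converges weakly in $W$ to $z_\star$, and $z_\star\in\D(\F)$ because $\D(\F)$ is closed and convex. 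Strong continuity of $\F$ in (A2) upgrades this to $\F(z_n)\to\F(z_\star)$ in $V$, while convexity of $\R_U$ and $\R_W$ (from the positivity of their Hessians) yields weak lower semicontinuity of the regularization. Consequently $(u_\star,w_\star)$ attains the infimum; the degenerate cases $\lambda\in\{0,1\}$ reduce to the classical single-variable Tikhonov existence statement. The necessary condition \refe{eq:MinCondition} is then immediate from the $C^1$-smoothness of $\tab$ on the open set $U\times W$ and Fermat's principle.

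For sufficiency at a critical point $(u_\star,w_\star)$, I would expand $\tab$ to second order. Setting $h_z=\lambda h_u+(1-\lambda)h_w$, strong convexity from (A3)--(A4) gives $\R_U(u_\star+h_u)-\R_U(u_\star)-\dual{\R_U'(u_\star)}{h_u}\ge\frac{C}{2}\norm{h_u}{U}^2$ and an analogous inequality for $\R_W$. For the fidelity, $\F\in C^1$ together with local Lipschitz regularity of $\F'$ on a bounded set yield $\bigl|\,\norma{\F(z_\star+h_z)-\vd}^2-\norma{\F(z_\star)-\vd}^2-2\dual{\F(z_\star)-\vd}{\F'(z_\star)h_z}\,\bigr|\le C_F\norm{h_z}{W}^2$, where $C_F$ depends only on $\norma{\F(z_\star)-\vd}$ and on local bounds on $\F'$. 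Summing and using that the first-order terms vanish at the critical point, $\tab(u_\star+h_u,w_\star+h_w,\lambda)-\tab(u_\star,w_\star,\lambda)\ge\frac{C\lambda\alpha}{2}\norm{h_u}{U}^2+\frac{C(1-\lambda)\beta}{2}\norm{h_w}{W}^2-C_F\norm{h_z}{W}^2$, and the last term is in turn bounded by a constant times $\norm{h_u}{U}^2+\norm{h_w}{W}^2$ through the continuous embedding $U\hookrightarrow W$. Choosing $\alpha,\beta$ large enough that both coefficients of $\norm{h_u}{U}^2$ and $\norm{h_w}{W}^2$ on the right are strictly positive makes the entire right-hand side strictly positive, so the critical point is a strict local minimum; the cases $\lambda\in\{0,1\}$ need only the single-variable analogue.

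The main obstacle is to ensure that the threshold on $\alpha,\beta$ in the last step is well-defined, i.e.~that $C_F$ can be bounded independently of the regularization parameters. This is handled by restricting attention to critical points lying in a fixed sublevel set $\{\tab\le M\}$: the coercivity of $\R_U,\R_W$ bounds $(u_\star,w_\star)$, and hence $z_\star$, in norm, and the $C^1$-regularity postulated in (A2) then bounds $\norma{\F'(z_\star)}$ and $\norma{\F(z_\star)-\vd}$ uniformly. A secondary point is that the $C^1$-regularity of $\F$ alone does not yield a second Fréchet derivative, so the sufficiency is most cleanly phrased through the Taylor estimate above rather than through a formal Hessian.
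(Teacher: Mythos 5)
Your proposal follows essentially the same route as the paper: the direct method (coercivity obtained by integrating the Hessian bounds of (A3)--(A4) twice, weak lower semicontinuity from the strong continuity of $\F$ and the convexity of the regularizers) for existence, Fermat's principle for \refe{eq:MinCondition}, and dominance of the regularization's strong convexity over the fidelity term for sufficiency when $\alpha,\beta$ are large. If anything you are more careful than the paper on the last step, which merely asserts that ``the second derivative of $\tab$ is positive for sufficiently large $\alpha$ and $\beta$'' even though (A2) only provides $\F\in C^1$; your Taylor-estimate formulation with a locally Lipschitz $\F'$ and the sublevel-set localization of the threshold address precisely the points the paper leaves implicit.
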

\begin{proof} 
    The proof is a straightforward application of the variational calculus.
    Let $\lambda \in (0,1)$.
    Since $F$ is strongly continuous,
    the fidelity term is weakly lower semicontinuous.
    So are the regularizations $\rw$ and $\ru$
    by the continuity and convexity argument.
    The functional $\tab$ is their conical sum
    and hence it is weakly lower semicontinuous as well.
    
    Now, Taylor's theorem shows for the regularization $\rw$ that
    \begin{equation*}
	\rw(w)=\rw(0)+\dual{\rw'(0)}{w}_{W^*} 
	+ \int^1_0(1-\theta)\dual{\rw''(\theta w)w}{w}_{W^*}\dtheta
    \end{equation*}
    and so from the assumption (A3) we conclude
    \begin{equation}\label{eq:w_bound}
	\rw(w)\geq C \norm{w}{W}^2
	\qquad\mbox{ for any } w\in W.
    \end{equation}
    Analogously, it follows from the assumption (A4) that
    \begin{equation}\label{eq:u_bound}
	\ru(u)\geq C \norm{u}{U}^2
	\qquad\mbox{ for any } u\in U.
    \end{equation}
    This shows that the functional $\tab$ is also weakly coercive, i.e. 
    \begin{equation*}
	\tab(u,w,\lambda) 
	> C\left (\lambda\alpha\norm{u}{U}^2 
	+ (1-\lambda)\beta\norm{w}{W}^2\right) \to \infty
    \end{equation*}
    as $\norm{u}{U} + \norm{w}{W} \to \infty$. Both properties of $\tab$
together imply that
    the functional $\tab$ attains its minimum (cf. \cite[Theorem
25.D]{Zeidler1989}).
    As $\tab$ is differentiable,
    a minimizer solves the equation \refe{eq:MinCondition}.
    The case when $\lambda =0$ and $\lambda =1$ follows the same lines.
     
    The second derivative of $\tab$ with respect to $u$ and $w$ is positive
    for some sufficiently large $\alpha$ and $\beta$
    which implies 
    that every solution of \refe{eq:MinCondition} is a local minimizer.
\end{proof}

Expanding the condition \refe{eq:MinCondition} for $\lambda \in (0,1)$ 
reveals
\footnote{Note that $F':W\to L(W,V)$, and so $F'(z) \in L(W,V)$ for $z \in W$
and
$F'(z)h \in V$ for $h\in W$}
\begin{equation*}
    \eqalign{
    &\Big[2\scal{F'(\wl) \,\cdot}{F(\wl)-v}
    +\alpha \dual{\reg'_U(u)}{\cdot}_{U^*} \Big]\lambda = 0, \\ 
    &\Big[2\scal{F'(\wl) \,\cdot}{F(\wl)-v}
    +\beta\dual{\reg'_W(w)}{\cdot}_{W^*} \Big](1-\lambda) =0,
    }
\end{equation*}
and thus
\begin{equation*}
    \alpha \dual{\reg'_U(u)}{\cdot}_{U^*} 
    = \beta\dual{\reg'_W(w)}{\cdot}_{W^*}.
\end{equation*}
We use the above formula to establish the so-called Ritz projection 
from the space $W$ to its subspace $U$,
which will turn out useful.
\begin{lemma}[Ritz projection]\label{thm:RitzProjection}
Assume (A1), (A3) and (A4).
Let $u \in U$ be the solution of the problem 
\begin{equation}\label{eq:RitzProblem}
    \alpha\dual{\ru'(u)}{h}_{U^*} = \beta\dual{\rw'(w)}{h}_{W^*} 
    \quad  \mbox{ for all } h \in U,
\end{equation}
where $w \in W$ and $\alpha, \beta>0$ are fixed. 
Then, 
\begin{itemize}
  \item[(i)]  the map $\pu:W\to U$ such that $ w \mapsto \pu (w) = u$ is
well-defined,
  \item[(ii)] the map $\pu$ is continuously differentiable
      with $\pu' =  [\ru''(\pu(w))]^{-1} \circ \frac{\beta}{\alpha} \rw''$,
  \item[(iii)] the a priori estimate $\norm{u}{U}\leq C\norm
{\rw'(w)}{L(W,W^*)}$ holds true.
\end{itemize}
\end{lemma}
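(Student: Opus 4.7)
The three claims all rest on the strong convexity of $\ru$ provided by (A4), and I would handle them in turn.

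For (i), the plan is to recognize \refe{eq:RitzProblem} as the Euler--Lagrange equation of the functional $J(u):=\alpha\ru(u)-\beta\dual{\rw'(w)}{u}_{W^*}$ on $U$ (with $w$ fixed, and using the continuous embedding $U\hookrightarrow W$ from (A1) to make sense of the duality pairing on elements of $U$). The quadratic lower bound $\ru(u)\geq C\|u\|_U^2$ from \refe{eq:u_bound} makes $J$ coercive, while (A4) yields strict convexity. The direct method in the reflexive space $U$ then delivers a unique minimizer, which is the unique solution of \refe{eq:RitzProblem}; set $\pu(w):=u$.

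For (ii), I would apply the implicit function theorem to the $C^1$-map $G:U\times W\to U^*$ defined by $G(u,w):=\alpha\ru'(u)-\beta(\rw'(w))|_U$, where $(\cdot)|_U$ denotes restriction of a functional in $W^*$ to elements of $U$. The partial derivative $\partial_uG(u,w)=\alpha\ru''(u)$ is an isomorphism $U\to U^*$ by (A4) and Lax--Milgram, and $\partial_wG(u,w)=-\beta(\rw''(w)(\cdot))|_U\in L(W,U^*)$ depends continuously on $(u,w)$ by the $C^2$ assumptions. The IFT then yields local $C^1$-regularity of $\pu$ together with the derivative formula $\pu'(w)=-[\alpha\ru''(\pu(w))]^{-1}\circ\partial_wG(u,w)$, which simplifies to the announced $[\ru''(\pu(w))]^{-1}\circ\tfrac{\beta}{\alpha}\rw''(w)$ after restriction.

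For (iii), I would test \refe{eq:RitzProblem} with $h=u=\pu(w)$. Writing $\ru'(u)=\int_0^1\ru''(\theta u)u\dtheta$ (using $\ru'(0)=0$) and invoking the coercivity of $\ru''$ from (A4) yields $\dual{\ru'(u)}{u}_{U^*}\geq C\|u\|_U^2$. On the right-hand side, Cauchy--Schwarz together with the continuous embedding $U\hookrightarrow W$ gives $\beta|\dual{\rw'(w)}{u}_{W^*}|\leq\beta C'\|\rw'(w)\|_{W^*}\|u\|_U$, and dividing by $\alpha C\|u\|_U$ produces the advertised estimate.

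The only delicate point worth flagging is the systematic identification of $\rw'(w)\in W^*$ with an element of $U^*$ in (i) and (ii); this is licit because (A1) makes $U$ a closed subspace of $W$, so that the restriction operator $W^*\to U^*$ is well defined and continuous. Once this is in hand the rest is routine variational calculus, with no real analytical obstacle beyond unpacking the notation.
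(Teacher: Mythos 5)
Your proof is correct, but it reaches the three conclusions by a recognizably different route than the paper. For (i) the paper does not introduce a functional to minimize: it works directly with the operator $\ru':U\to U^*$, verifies hemicontinuity and strong monotonicity from (A4) via a Taylor-integral identity, and invokes the Browder--Minty theorem for monotone operators (\cite[Theorem 26.A]{Zeidler1989}) to get unique solvability of $\alpha\ru'(u)=\beta\rw'(w)$ together with Lipschitz continuity of $[\ru']^{-1}$ as a byproduct. Your direct-method argument on $J(u)=\alpha\ru(u)-\beta\dual{\rw'(w)}{u}_{W^*}$ exploits the potential structure (available here since $\ru'$ is by definition a derivative) and is arguably more elementary, but it does not hand you the Lipschitz inverse for free. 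For (ii) the paper applies the local inverse function theorem to $\ru'$ and then needs a separate properness argument to upgrade to a global inverse before differentiating the identity $\alpha\ru'(\pu(w))=\beta\rw'(w)$; your implicit function theorem argument sidesteps the globality issue more cleanly, since global uniqueness is already secured in (i) and the local $C^1$ branches must therefore coincide with $\pu$ (a patching step worth stating explicitly). Part (iii) is identical in both proofs. One small imprecision: the bijectivity of $\alpha\ru''(u):U\to U^*$ is not literally Lax--Milgram, since $U$ is only a reflexive Banach space; the correct justification is the coercivity estimate $\dual{\ru''(u)h}{h}_{U^*}\geq C\norm{h}{U}^2$, which gives injectivity and closed range, plus reflexivity to rule out a proper closed range --- the same ``linear operator theory'' the paper gestures at. This does not affect the validity of your argument.
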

\begin{proof}
    (i) It is sufficient to prove the unique solvability of the problem
\refe{eq:RitzProblem}.
    Since $U\subset W$, it follows that $W^* \subset U^*$, and hence $\rw'(w)\in
    U^*$.
    The assumption (A4) implies that the operator $\ru': U\to U^*$
    is hemicontinuous, i.e. $t\mapsto \dual{\ru'(u_1+tu_2)}{h}_{U^*}$
    is continuous on $[0,1]$ for all $u_1, u_2, h \in U$.
    We furthermore deduce that
    \begin{equation*}
	\eqalign{
	&\dual{\ru'(u_1)-\ru'(u_2)}{u_1-u_2}_{U^*} \\
	&=\dual{\int^1_0\ru''(u_1+\theta(u_2-u_1))(u_1-u_2)\dtheta}{u_1-u_2}_{
U^*} \\
	&=\int^1_0\dual{\ru''(u_1+\theta(u_2-u_1))(u_1-u_2)}{u_1-u_2}_{U^*}
\dtheta \\
	&\geq C\norm{u_1-u_2}{U}^2,}
    \end{equation*}
    which shows that $\ru'$ is strongly monotone and a fortiori coercive.
    The theory of monotone operators (see \cite[Theorem 26.A]{Zeidler1989}) then
    guarantees
    that for any $ w \in W$ there exists a unique $u=\pu(w)$ such that
    \begin{equation}\label{eq:RitzProblem2}
	\alpha\ru'(\pu (w)) = \beta\rw'(w),
    \end{equation}
    and that $[\ru'(u)]^{-1}$ is Lipschitz continuous.
    
    (ii) We can now apply the local inverse function theorem 
    \cite[Theorem 4.F]{Zeidler1985},
    because the derivative $\ru''(\pu(w))\in L(U,U^*)$ is bijective 
    on account of (A4) and the linear operator theory.
    It is furthermore a global inverse map,
    because $\ru'$ is proper, 
    i.e. the preimage $\ru'(M)$ of any compact set $M$ is also compact 
    (e.g. \cite[Chapter 4]{Zeidler1985}).
    Consequently, the differentiation of \refe{eq:RitzProblem2} yields
    \begin{equation*}
	\pu'(w) = \left[\ru''(\pu(w))\right]^{-1}\circ
\frac{\beta}{\alpha}\rw''(w),
	\qquad w \in W.
    \end{equation*}
    
    (iii) We put $h=u$ in \refe{eq:RitzProblem} to estimate that
    \begin{equation*}
	\eqalign{
   	C\norm{u}{U}^2 &\leq \alpha\dual{\ru'(u)}{u}_{U^*} 
	= \beta\dual{\rw'(w)}{u}_{W^*} \\
	&\leq \beta\norm{\rw'(w)}{L(W,W^*)}\tilde C\norm{u}{U},}
     \end{equation*}
    which concludes the proof.
    \end{proof}

\begin{remark}
The direct consequence of the above considerations is
that the system \refe{eq:MinCondition} is for $\lambda \in (0,1)$ equivalent 
to the system
\begin{equation*}
\begin{array}{rcl}
    D_w \tab(\pu(w),w,\lambda) &=& 0,\\
    \alpha\ru'(\pu (w)) &=& \beta\rw'(w),
\end{array} 
\end{equation*}
and for $\lambda=0$ we can still define ``the minimizer'' $\uoab(0)$
as the projection $\pu(\woab(0))$.
\end{remark}

The following theorem provides the main result of this section.
It establishes a continuous dependence of the minimizer of $\tab$ on
the parameter $\lambda$. The main idea of the proof lies in realizing that the
problem is a saddle point one. We minimize in $U\times W$ and 
maximize in $\lambda.$ Further, the proof follows the standard lines (compare
with \cite{Engl1996}).
\begin{theorem}[Continuous dependence on $\lambda$]\label{thm:main}
Assume (A1)-(A4). Let $\alpha\geq\beta>0$ and $v^\delta \in V$. 
Assume that there exists a unique global minimizer $(\uoab(\lambda),
\woab(\lambda))$
of \refe{eq:tikhonov_functional_with_continuation} for any $\lambda \in [0,1]$
\footnote{As we have mentioned, if $\lambda=0$ and $\lambda=1$,
we consider just $\woab(0)$ and $\uoab(1)$, respectively.}.
Then the mappings
\begin{equation*}
    \eqalign{
    \woab: [0,1) \to W, &\qquad \lambda \mapsto \woab(\lambda), \\
    \uoab: (0,1] \to U, &\qquad \lambda \mapsto \uoab(\lambda)}
\end{equation*}
are continuous.
\end{theorem}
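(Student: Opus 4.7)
The plan is to adapt the standard variational stability argument for Tikhonov minimizers to the present two-variable, continuation-parametrized setting. I fix $\lambda_0$ in the appropriate domain, take an arbitrary sequence $\lambda_n \to \lambda_0$, and denote $(u_n, w_n) := (\uoab(\lambda_n), \woab(\lambda_n))$ and $z_n := \lambda_n u_n + (1-\lambda_n) w_n$. The argument splits naturally into the interior case $\lambda_0 \in (0,1)$ and the two boundary cases $\lambda_0 = 0$ (only $\woab$) and $\lambda_0 = 1$ (only $\uoab$); for the boundary cases I restrict to $\lambda_n$ in the interior since the degenerate value is already the target.

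Step one establishes uniform bounds. Testing minimality against the admissible pair $(0,0)$ gives $\tab(u_n, w_n, \lambda_n) \le \norma{F(0)-\vd}^2$, and combining with the quadratic lower bounds \refe{eq:w_bound}, \refe{eq:u_bound} yields
\begin{equation*}
\lambda_n \alpha C \norm{u_n}{U}^2 + (1-\lambda_n) \beta C \norm{w_n}{W}^2 \le \norma{F(0)-\vd}^2.
\end{equation*}
This controls $\norm{w_n}{W}$ whenever $\lambda_0 < 1$ and $\norm{u_n}{U}$ whenever $\lambda_0 > 0$. For the complementary ``inactive'' variable at a boundary I invoke Lemma~\ref{thm:RitzProjection}: the Euler--Lagrange condition \refe{eq:MinCondition} forces $u_n = \pu(w_n)$ for $\lambda_n \in (0,1)$, so the a priori estimate $\norm{u_n}{U} \le C\norm{\rw'(w_n)}{L(W,W^*)}$, together with the $C^2$ regularity of $\rw$ from (A3), transfers a bound on $w_n$ to one on $u_n$; the symmetric reasoning handles $\lambda_0 = 1$.

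Step two identifies the limit. By reflexivity, a subsequence satisfies $u_n \rightharpoonup u^\star$ in $U$ and $w_n \rightharpoonup w^\star$ in $W$, so $z_n \rightharpoonup z^\star := \lambda_0 u^\star + (1-\lambda_0) w^\star$. Strong continuity of $F$ from (A2) gives $F(z_n)\to F(z^\star)$ in $V$, so the fidelity term converges. For an arbitrary competitor $(\tilde u,\tilde w)$, minimality yields $\tab(u_n,w_n,\lambda_n)\le \tab(\tilde u,\tilde w,\lambda_n)$; passing to $\liminf$ on the left (weak lower semicontinuity of the convex regularizers plus convergence of the fidelity term) and to the limit on the right (continuity of $\tab$ in $\lambda$) produces $\tab(u^\star,w^\star,\lambda_0)\le \tab(\tilde u,\tilde w,\lambda_0)$. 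The assumed uniqueness of the global minimizer identifies $(u^\star,w^\star)$ with $(\uoab(\lambda_0), \woab(\lambda_0))$, and a standard subsequence-of-subsequence argument promotes this to weak convergence of the entire sequence.

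Step three upgrades weak to strong convergence. Specializing $(\tilde u,\tilde w) := (u^\star,w^\star)$ in the inequality chain shows $\tab(u_n,w_n,\lambda_n) \to \tab(u^\star,w^\star,\lambda_0)$; subtracting the known limit of the fidelity term and using the separate liminf estimates for the two additive regularization pieces forces each of them to converge individually, in particular $(1-\lambda_n)\beta\rw(w_n)\to(1-\lambda_0)\beta\rw(w^\star)$, and hence $\rw(w_n)\to\rw(w^\star)$ when $\lambda_0<1$. A Taylor expansion as in the proof of Lemma~\ref{lm:well-possedness} combined with (A3) delivers the strong-convexity estimate
\begin{equation*}
\rw(w_n) - \rw(w^\star) - \dual{\rw'(w^\star)}{w_n - w^\star}_{W^*} \ge C \norm{w_n - w^\star}{W}^2,
\end{equation*}
whose left-hand side vanishes by the convergence of values and the weak convergence of $w_n$, producing $w_n \to w^\star$ strongly in $W$. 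The analogous step for $u_n$ based on (A4) is identical when $\lambda_0 > 0$. The main obstacle I expect is the bookkeeping at the boundary, where one of $\lambda_n\alpha$, $(1-\lambda_n)\beta$ degenerates and one must rely on the Ritz-projection estimate to ensure that the ``inactive'' variable does not contaminate the convergence of $z_n$ or of the functional values; the tightness of the bound in Lemma~\ref{thm:RitzProjection}(iii) is what makes this work.
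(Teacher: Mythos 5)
Your overall architecture is the paper's: uniform energy bounds from minimality against a fixed competitor, weak compactness, identification of the weak limit via weak lower semicontinuity plus the assumed uniqueness, and an upgrade to strong convergence. Your final step is in fact more explicit than the paper's (the paper argues by contradiction that $\ruw(u_m,w_m,\lambda)$ converges to $\ruw(\overline u,\overline w,\lambda)$ and then simply asserts strong convergence; your strong-convexity inequality derived from (A3)--(A4) is exactly the missing justification). Two issues, one minor and one genuine. Minor: testing minimality against $(0,0)$ requires $0\in\D(F)$, which is not among the assumptions; the paper tests against an arbitrary fixed $(u,w)\in\D\times\D(F)$ and absorbs the extra terms into the constant, which is the safe version of the same step.

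The genuine gap is your treatment of the boundary case $\lambda_0=1$ by ``symmetric reasoning'' with Lemma \ref{thm:RitzProjection}. That lemma only provides the projection $W\to U$: it bounds $\norm{u}{U}$ in terms of $\rw'(w)$. There is no reverse estimate bounding $\norm{w}{W}$ in terms of $\ru'(u)$, and one cannot be manufactured from the Euler--Lagrange relation $\alpha\dual{\ru'(u)}{h}_{U^*}=\beta\dual{\rw'(w)}{h}_{W^*}$, because that identity is only available for test elements $h\in U$; you may not insert $h=w\in W\setminus U$ to exploit the strong monotonicity of $\rw'$. Consequently, as written you have not shown that $w_n$ is bounded when $\lambda_n\to 1$, so the weak convergence $z_n\rightharpoonup z^\star$ (and with it the convergence of the fidelity term) is not established in that case. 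The repair is the one the paper uses and is simpler than a reverse projection: the energy bound $(1-\lambda_n)\norm{w_n}{W}^2\leq C$ already gives $\norm{(1-\lambda_n)w_n}{W}\leq\sqrt{C(1-\lambda_n)}\to 0$, i.e. $(1-\lambda_n)w_n\to 0$ strongly in $W$, hence $z_n=\lambda_n u_n+(1-\lambda_n)w_n\rightharpoonup u^\star$ without any control on $w_n$ itself; the term $(1-\lambda_n)\beta\rw(w_n)\geq 0$ is then simply discarded in the liminf. With that substitution your argument closes in all three cases.
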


The theorem has an important corollary, which establishes local correctness of
the continuation extension at $\lambda=1:$
\begin{corollary}\label{cor:local_corectness}
Let the assumptions of Theorem \ref{thm:main} be fulfilled. 
If $\lambda\to 1$, then $\uoab(\lambda) \to \uoa$.
\end{corollary}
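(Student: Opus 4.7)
The plan is to read Corollary \ref{cor:local_corectness} as a direct consequence of Theorem \ref{thm:main} applied at the right endpoint $\lambda = 1$ of the interval of continuity. Since Theorem \ref{thm:main} asserts continuity of the mapping $\uoab : (0,1] \to U$, in particular continuity at $\lambda = 1$, I will infer
\begin{equation*}
\lim_{\lambda \to 1^-} \uoab(\lambda) = \uoab(1)
\end{equation*}
directly from the theorem. Only the identification $\uoab(1) = \uoa$ remains to be verified.

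For that identification, I would substitute $\lambda = 1$ into \refe{eq:tikhonov_functional_with_continuation} and \refe{eq:wl}. The convex combination $z = \lambda u + (1-\lambda) w$ reduces to $z = u$, the weight $(1-\lambda)\beta$ in front of $\rw(w)$ vanishes, and the weight $\lambda\alpha$ in front of $\ru(u)$ becomes $\alpha$. Consequently
\begin{equation*}
\tab(u, w, 1) = \norma{F(u) - v^\delta}^2 + \alpha \ru(u) = \Ta(u),
\end{equation*}
which is independent of $w$. Following the convention explicitly adopted in the paper (see the footnote of Theorem \ref{thm:main} and the paragraph preceding its statement), $\uoab(1)$ denotes the minimizer in $u$ of this reduced one-variable functional, which is by definition exactly $\uoa$.

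Chaining the two steps gives $\uoab(\lambda) \to \uoa$ as $\lambda \to 1$, which is the claim. I do not anticipate any serious obstacle: all the work has already been invested in Theorem \ref{thm:main}; the only mildly delicate point is the unambiguous interpretation of the boundary value $\uoab(1)$, which is forced by the degeneration of the two-variable minimization into a one-variable one when $\lambda = 1$, and this is precisely what the paper's notational convention encodes.
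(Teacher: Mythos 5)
Your proposal is correct and takes essentially the same route as the paper: the corollary is intended as an immediate consequence of Theorem \ref{thm:main} (continuity of $\uoab$ on $(0,1]$, hence at $\lambda=1$) together with the convention that $\tab(u,w,1)=\Ta(u)$ so that $\uoab(1)=\uoa$; the proof environment that follows the corollary in the source is in fact the proof of Theorem \ref{thm:main} itself, and no separate argument for the corollary is given beyond this observation.
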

\begin{proof}
We begin the proof of Theorem \ref{thm:main} with a few estimates for $\tab$,
which will help us later.
It is evident that
\begin{equation}\label{eq:uw_bound}
    \ruw(u, v, \lambda) \leq \alpha\ru(u)+\beta\rw(w)
\end{equation}
for any $u \in U, w \in W$ and $\lambda \in [0,1]$ . 
Conversely, the assumption (A4) and the convexity of $\rw$ imply
\begin{equation*}
    \eqalign{
    \ruw(u,w, \lambda)
    &\geq \alpha\lambda\rw(u)+\beta(1-\lambda)\rw(w) \\
    &\geq \beta[\lambda\rw(u)+(1-\lambda)\rw(w)]\\
    &\geq \beta\rw(\lambda u+ (1-\lambda)w),}
\end{equation*}
which leads to the estimate
\begin{equation}\label{eq:z_bound}
    \norm{F(z)-v^\delta}{V}^2 + \beta\rw(z)
    \leq \norm{F(z)-v^\delta}{V}^2 + \ruw(u,w, \lambda)
\end{equation} 
for any $(u,w) \in \D\times \D(F)$ and $z=\lambda u+ (1-\lambda)w$.
By the mean value theorem we obtain for the fidelity term
\begin{equation}\label{eq:f_bound}
    \eqalign{
    \norm{F(z)-v^\delta}{V}^2
    &=\norm{F(z) \mp F(w) - v^\delta}{V}^2\\
    &\leq \norm{F(z)-F(w)}{V}^2 + \norm{F(w)-v^\delta}{V}^2\\
    &\leq \norm{F'(\xi)(\lambda u + (1-\lambda)w-w)}{V}^2 +
\norm{F(w)-v^\delta}{V}^2\\
    &\leq \left[\norm{F'}{L(S, V)}\lambda\norm{u-w}{W}\right]^2
    + \norm{F(w)-v^\delta}{V}^2,}
\end{equation}
where the set $S$ is the line segment $u+t(w-u),$ $t\in [0,1]$.

Let now $\lambda_k\to\lambda\in[0,1]$ as $k\to\infty$.
Denote by $(u_k, w_k)$ the corresponding global minimizer $(\uoab(\lambda_k),
\woab(\lambda_k))$
and set $z_k=\lambda_k u_k + (1-\lambda_k)w_k$.
By the definition it holds true  of minimizer that
\begin{equation*}
    \tab(u_k, w_k, \lambda_k) \leq \tab(u, w, \lambda_k)
\end{equation*}
for any $(u,w) \in \D\times \D(F)$. 
We can moreover bound the minimum of $\tab$ uniformly for any $\lambda\in [0,1]$
with the estimates \refe{eq:uw_bound} and \refe{eq:f_bound}
\begin{equation}\label{eq:tab_upbound}
    \eqalign{
    \norm{F(z_k)-v^\delta}{V}^2
+\alpha\lambda_k\ru(u_k)+\beta(1-\lambda_k)\rw(w_k) \\
    \leq \norm{F(z)-v^\delta}{V}^2 + \ruw(u, w, \lambda_k) \\
    \leq \left[\norm{F'}{L(S, V)}\norm{u-w}{W}\right]^2 +
\norm{F(w)-v^\delta}{V}^2 + \alpha\ru(u)+\beta\rw(w), 
    }
\end{equation}
where $(u,w) \in \D\times \D(F)$.
This implies combining with \refe{eq:w_bound} and \refe{eq:u_bound} that
\begin{equation*}
    C(1-\lambda_k)\norm{w_k}{W}^2
    \leq\beta(1-\lambda_k)\rw(w_k) \leq \tilde C,
\end{equation*}
and
\begin{equation*}
    C\lambda_k\norm{u_k}{U}^2
    \leq\alpha\lambda_k\ru(u_k)\leq \tilde C.
\end{equation*}
Therefore, the sequences $\{u_k\}$ and $\{ w_k\}$ are bounded in $W$,
unless $\lambda_k \to 0$ and $\lambda_k \to 1$,
where the estimate \refe{eq:tab_upbound} is inapplicable for $\{u_k\}$ and $\{
w_k\}$, respectively.
If $\lambda_k \to 0$,
we can however use Lemma \ref{thm:RitzProjection}
to find
\begin{equation*}
    \norm{u_k}{U} \leq \rw'(w_k) \leq C.
\end{equation*}
and consequently
\begin{equation*}
    \lambda_k u_k \to 0 
    \quad \mbox{ in } U
    \quad \mbox{ as } \lambda_k \to 0.
\end{equation*}
If $\lambda_k \to 1$, 
it follows from
\begin{equation*}
    C(1-\lambda_k)\norm{w_k}{W}^2 
    = C\norm{\sqrt{1-\lambda_k} w_k}{W}^2 \leq \tilde C
\end{equation*}
that
\begin{equation*}
    (1-\lambda_k) w_k \to 0 
    \quad \mbox{ in } W
    \quad \mbox{ as } \lambda_k \to 1.
\end{equation*}
The estimates \refe{eq:z_bound} and \refe{eq:w_bound}
on the other hand force
\begin{equation}\label{eq:tab_lowbound}
    \eqalign{
    \norm{F(z_k)-v^\delta}{V}^2 +\ruw(u_k, w_k, \lambda_k)
    &\geq \norm{F(z_k)-v^\delta}{V}^2 + \beta\rw(z_k)\\
    &\geq \beta C \norm{z_k}{W}^2,
    }
\end{equation}
which together with \refe{eq:tab_upbound} ensures
that the sequence $\{z_k\}$ is always uniformly bounded  in $W$
\begin{equation*}
  \norma{z_k}_W \leq C.
\end{equation*}

Bounded sequences in reflexive spaces are weakly compact 
and so we can choose weakly convergent subsequences
\begin{equation}
    u_m \rightharpoonup \overline {u},\,
    w_m \rightharpoonup \overline{w}\,\mbox{ and }
    z_m \rightharpoonup \overline{z}
    \quad\mbox{ as } m\to\infty.
\end{equation}
The above estimates moreover establish that
\begin{equation*}
    \overline{z} 
    = \lambda\overline{u} + (1-\lambda)\overline{w}
    \quad \mbox { for any } \lambda \in [0,1].
\end{equation*}
We then consecutively deduce by the weak lower semicontinuity of $\tab$ and
the definition of minimizer that
\begin{equation*}
    \eqalign{
    \|F(&\overline {z})-v^\delta\|_V^2 +\ruw(\overline {u},\overline
{w},\lambda)\\
    &\leq \underset{m\to \infty}{\lim\inf}
    \left[\norm{F(z_m)-v^\delta}{V}^2 + \ruw(u_m, w_m, \lambda_m)\right]\\
    &\leq \underset{m\to \infty}{\lim\sup}
    \left[\norm{F(\lambda_m u_m+(1-\lambda_m)w_m)-v^\delta}{V}^2 + \ruw(u_m,
w_m, \lambda_m)\right]\\
    &\leq \lim_{m\to\infty}
    \left[\norm{F(\lambda_m u+(1-\lambda_m)w)-v^\delta}{V}^2 + \ruw(u, w,
\lambda_m)\right]\\
    &= \norm{F(z)-v^\delta}{V}^2 + \ruw(u, w, \lambda)}
\end{equation*}
for all $(u,w) \in \D\times \D(F)$.
This shows
that $(\overline u, \overline w)$ is 
minimizer of \refe{minimization_problem}
and that
\begin{equation}\label{eq:tab_limit}
    \lim_{m\to\infty} \tab(u_m, w_m, \lambda_m)
    =\tab(\overline u, \overline w, \lambda).
\end{equation}
Assume now that $(u_m, w_m) \not\to(\overline u, \overline w)$.
Then $c:=\lim\sup \ruw(u_m,w_m, \lambda)>\ruw(\overline u,\overline w, \lambda)$
and there exists a subsequence $\{(u_n,w_n)\}$ of $\{(u_m,w_m)\}$
such that $(u_n,w_n) \rightharpoonup (\overline u, \overline w)$,
$F(z_n)\rightharpoonup F(\overline z)$ and $\ruw(u_n,w_n, \lambda) \to c$.
As a consequence of \refe{eq:tab_limit}, we obtain
\begin{equation*}
    \eqalign{
    \lim_{n\to\infty} \norm{F(z_n)-v^\delta}{V}
    &= \norm{F(\overline z)-v^\delta}{V} + \ruw(\overline u,\overline w,
\lambda)-c\\
   &<\norm{F(\overline z)-v^\delta}{V},}
\end{equation*}
which is in contradiction with weak lower semicontinuity of the norm.

Since the minimizer $(\overline u, \overline w)$ is unique for any $\lambda \in
[0,1]$, 
the above considerations demonstrate that
every sequence $\{(u_k, w_k)\}$ contains a subsequence 
strongly converging towards $(\overline u, \overline w)$,
and therefore, the functions $\uoab$ and $\woab$ are continuous
on the intervals $(0,1]$ and $[0,1)$, respectively.
\end{proof}
The next two theorems address the questions of stability and convergence
of minimizers of $\tab$.
We omit their proofs, 
because they  go along the same lines 
as e.g. in \cite[Theorem 10.2 and 10.3]{Engl1996}.

\begin{theorem}[stability]\label{th:stability}
Assume (A1)-(A4), $\alpha > 0, \beta>0$ and $v^\delta \in V$. 
Let $\lambda \in [0,1]$ be fixed and let $\{v_k\}$ and $\{(u_k, w_k)\}$ be
sequences
such that $v_k\to v^\delta$ and $(u_k, w_k)$ is a minimizer 
of \refe{eq:tikhonov_functional_with_continuation}
with $v^\delta$ replaced by $v_k$.
Then there exists a convergent subsequence of $\{(u_k, w_k)\}$
and the limit of every convergent subsequence is a minimizer 
of \refe{eq:tikhonov_functional_with_continuation}.
\end{theorem}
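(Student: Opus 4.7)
The plan is to follow essentially the same pattern as in the proof of Theorem~\ref{thm:main}, trading the variation of $\lambda$ for the variation of the data $v_k \to v^\delta$ (here both $\alpha, \beta$ and $\lambda$ are fixed). First, using the minimality of $(u_k, w_k)$ for the $v_k$-perturbed functional, tested against any fixed admissible pair $(u, w) \in \D \times \D(F)$, I would write
\begin{equation*}
\norm{F(z_k) - v_k}{V}^2 + \ruw(u_k, w_k, \lambda) \leq \norm{F(z) - v_k}{V}^2 + \ruw(u, w, \lambda),
\end{equation*}
where $z_k = \lambda u_k + (1-\lambda) w_k$. Since $v_k \to v^\delta$ in $V$, the right-hand side is uniformly bounded in $k$, and combined with the coercivity estimates \refe{eq:w_bound} and \refe{eq:u_bound} this yields uniform bounds on $\{\sqrt{\lambda\alpha}\, u_k\}$ in $U$ and on $\{\sqrt{(1-\lambda)\beta}\, w_k\}$ in $W$, together with a bound on $\{z_k\}$ in $W$ obtained exactly as in \refe{eq:tab_lowbound}.

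For the boundary values $\lambda = 0$ and $\lambda = 1$, the missing bound on the absent variable would be recovered as in Theorem~\ref{thm:main}: if $\lambda = 0$ the Ritz projection of Lemma~\ref{thm:RitzProjection} supplies $\norm{u_k}{U} \leq C \norm{\rw'(w_k)}{L(W,W^*)}$, whereas if $\lambda = 1$ only $\{u_k\}$ is relevant and it is already bounded. By reflexivity of $U$ and $W$ I would extract a subsequence with $u_m \rightharpoonup \overline u$, $w_m \rightharpoonup \overline w$, hence $z_m \rightharpoonup \overline z = \lambda \overline u + (1-\lambda) \overline w$. The strong continuity assumed in (A2) then upgrades this to $F(z_m) \to F(\overline z)$, and since $v_m \to v^\delta$ strongly we also have $F(z_m) - v_m \to F(\overline z) - v^\delta$ in $V$.

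To identify $(\overline u, \overline w)$ as a minimizer of $\tab(\cdot, \cdot, \lambda)$ for the limit data $v^\delta$, I would pass to the limit in the displayed inequality: the left-hand side is handled by weak lower semicontinuity of $\ruw$ together with the just-obtained strong convergence of $F(z_m) - v_m$, while the right-hand side is continuous in the data, yielding
\begin{equation*}
\norm{F(\overline z) - v^\delta}{V}^2 + \ruw(\overline u, \overline w, \lambda) \leq \norm{F(z) - v^\delta}{V}^2 + \ruw(u, w, \lambda)
\end{equation*}
for every admissible $(u, w)$. The upgrade from weak to strong convergence of $(u_m, w_m)$ toward $(\overline u, \overline w)$ would then follow the contradiction argument at the end of the proof of Theorem~\ref{thm:main}: assuming $(u_m, w_m) \not\to (\overline u, \overline w)$ one has $c := \limsup \ruw(u_m, w_m, \lambda) > \ruw(\overline u, \overline w, \lambda)$, and passing along a sub-subsequence where $\ruw(u_n, w_n, \lambda) \to c$ while the full Tikhonov functional converges to $\tab(\overline u, \overline w, \lambda)$ forces $\norm{F(z_n) - v_n}{V}^2$ below $\norm{F(\overline z) - v^\delta}{V}^2$, contradicting weak lower semicontinuity of the norm.

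The main obstacle I anticipate is coordinating the two limits — the data perturbation $v_k \to v^\delta$ and the weak convergence of the minimizers $(u_k, w_k)$ — in such a way that both sides of the minimality inequality pass cleanly to the limit; the fidelity term in particular couples them. The boundary cases $\lambda \in \{0, 1\}$, where one variable degenerates and the Ritz projection must take its place, behave exactly as in Theorem~\ref{thm:main} and add no fundamentally new difficulty.
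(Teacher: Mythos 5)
Your proposal is correct and matches the intended argument: the paper omits this proof, stating only that it ``goes along the same lines as \cite[Theorem 10.2 and 10.3]{Engl1996}'', and your outline is precisely that standard stability argument, transplanted from the paper's own proof of Theorem~\ref{thm:main} with the data perturbation $v_k\to v^\delta$ playing the role of the parameter perturbation $\lambda_k\to\lambda$. The only remark worth adding is that since (A2) gives \emph{strong} continuity of $F$, the final contradiction step is even simpler than you state: $F(z_n)-v_n\to F(\overline z)-v^\delta$ strongly forces the fidelity terms to converge exactly, so the regularization terms must converge to $\ruw(\overline u,\overline w,\lambda)$ directly, without appealing to weak lower semicontinuity of the norm.
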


\begin{theorem}[convergence]\label{th:convergence}
Assume (A1)-(A4). Let $v^\delta \in V$ with $\norm{v-v^\delta}{V}\leq \delta$
and let $\lambda \in [0,1]$ be fixed.
Let $\alpha(\delta)$ and $\beta(\delta)$ be such that
$\alpha(\delta)\to 0,$ $\beta(\delta) \to 0$ 
and $\delta^2/\alpha(\delta)\to 0,$ $\delta^2/\beta(\delta) \to 0$
as $\delta \to 0$.
Then every sequence $\{(u_{\alpha_k}^{\delta_k}, w_{\beta_k}^{\delta_k})\}$, 
where $\delta_k \to 0,$ $\alpha_k=\alpha(\delta_k),$ $\beta_k=\beta(\delta)$
and $(u_{\alpha_k}^{\delta_k}, w_{\beta_k}^{\delta_k})$ is the solution of
\refe{minimization_problem},
has a convergent subsequence.
The limit of every convergent subsequence is an $\ruw$-minimizing solution.
If in addition, the $\ruw$-minimizing solution $(\udagl, \wdagl)$
is unique, then
\begin{equation*}
    \lim_{\delta\to 0} (u_{\alpha_k}^{\delta_k}, w_{\beta_k}^{\delta_k})
    = (\udagl, \wdagl).
\end{equation*}
\end{theorem}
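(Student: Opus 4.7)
The plan is to mirror the classical Tikhonov convergence proof (Engl-Hanke-Neubauer, Theorem 10.3), adapted to the two-variable penalty $\ruw$ in place of a single $\alpha\R$. Fix an $\ruw$-minimizing solution $(\udagl,\wdagl)$ and write $z^\dag=\lambda\udagl+(1-\lambda)\wdagl$, so $F(z^\dag)=v$. Let $(u_k,w_k):=(u_{\alpha_k}^{\delta_k},w_{\beta_k}^{\delta_k})$ and $z_k=\lambda u_k+(1-\lambda)w_k$. Testing the minimality of $(u_k,w_k)$ against $(\udagl,\wdagl)$ together with $\norm{v-v^{\delta_k}}{V}\le\delta_k$ yields the fundamental inequality
\begin{equation*}
\norm{F(z_k)-v^{\delta_k}}{V}^2+\ruw(u_k,w_k,\lambda)\le \delta_k^2+\lambda\alpha_k\ru(\udagl)+(1-\lambda)\beta_k\rw(\wdagl),
\end{equation*}
whose right-hand side tends to zero under the hypotheses on $\alpha_k,\beta_k,\delta_k$. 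This immediately gives $F(z_k)\to v$ in $V$.

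Next I would exploit this inequality to control $\ru(u_k)$ and $\rw(w_k)$ separately. Dropping the fidelity term and dividing the regularization part by $\lambda\alpha_k$ and by $(1-\lambda)\beta_k$, and then using $\delta_k^2/\alpha_k\to 0$, $\delta_k^2/\beta_k\to 0$, produces
\begin{equation*}
\limsup_{k\to\infty}\ru(u_k)\le\ru(\udagl),\qquad \limsup_{k\to\infty}\rw(w_k)\le\rw(\wdagl).
\end{equation*}
The quadratic lower bounds for $\ru$ and $\rw$ coming from the Taylor argument used in the proof of Lemma \ref{lm:well-possedness} then imply that $\{u_k\}$ is bounded in $U$ and $\{w_k\}$ is bounded in $W$. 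At the degenerate endpoints $\lambda\in\{0,1\}$, where one of the two summands of $\ruw$ vanishes, I would replace the missing bound by the a priori estimate from Lemma \ref{thm:RitzProjection}(iii), exactly as was done at the analogous step of the proof of Theorem \ref{thm:main}.

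By reflexivity, I would extract weakly convergent subsequences $u_m\rightharpoonup\bar u$ in $U$ and $w_m\rightharpoonup\bar w$ in $W$, so that $z_m\rightharpoonup\bar z=\lambda\bar u+(1-\lambda)\bar w$. The strong continuity of $F$ in (A2) gives $F(z_m)\to F(\bar z)$; combined with $F(z_m)\to v$ from the first step this forces $F(\bar z)=v$, so the limit is feasible. Weak lower semicontinuity of $\ru$ and $\rw$, together with the $\limsup$ bounds above, then shows that $(\bar u,\bar w)$ is an $\ruw$-minimizing solution and, as a byproduct, that $\ru(u_m)\to\ru(\bar u)$ and $\rw(w_m)\to\rw(\bar w)$. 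The strong convexity of the regularizers (the quadratic coercivity from (A3) and (A4)) upgrades the weak convergence to strong convergence in the reflexive spaces $U$ and $W$.

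The main obstacle, as in the corresponding classical proof, will be the simultaneous handling of two independent regularization parameters: decoupling the bounds on $u_k$ and $w_k$ when $\alpha_k$ and $\beta_k$ may tend to zero at different rates, and treating the degenerate cases $\lambda\in\{0,1\}$ where the Ritz projection is needed in place of a direct penalty bound. Once subsequential strong convergence is in hand, uniqueness of $(\udagl,\wdagl)$ combined with the standard subsequence principle yields convergence of the entire sequence, as claimed.
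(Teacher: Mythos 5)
Your overall strategy is the intended one: the paper omits the proof and states only that it goes along the same lines as Theorems 10.2 and 10.3 of Engl--Hanke--Neubauer, and your adaptation of that classical argument (fundamental inequality from minimality against $(\udagl,\wdagl)$, boundedness, weak subsequential limits, strong continuity of $F$ to pass to the limit in the constraint, weak lower semicontinuity to identify the limit as $\ruw$-minimizing, strong convexity of the penalties to upgrade weak to strong convergence, and the subsequence principle under uniqueness) is exactly that route.

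There is, however, one step that does not go through as written. From the fundamental inequality, dropping the fidelity term and the nonnegative $\rw$-summand and dividing by $\lambda\alpha_k$ gives
\begin{equation*}
\ru(u_k)\;\le\;\frac{\delta_k^2}{\lambda\alpha_k}+\ru(\udagl)+\frac{(1-\lambda)\beta_k}{\lambda\alpha_k}\,\rw(\wdagl),
\end{equation*}
and the last term is of order $\beta_k/\alpha_k$, which the hypotheses of the theorem do not control. So your claimed bounds $\limsup_k\ru(u_k)\le\ru(\udagl)$ and $\limsup_k\rw(w_k)\le\rw(\wdagl)$ do not both follow from ``dividing by $\lambda\alpha_k$ and by $(1-\lambda)\beta_k$'': if, say, $\beta_k/\alpha_k\to\infty$, the bound on $\ru(u_k)$ degenerates and boundedness of $\{u_k\}$ in $U$ is lost (the Ritz projection only helps at the endpoints $\lambda\in\{0,1\}$, not here). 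The repair is to note that the very notion of an $\ruw$-minimizing solution depends on the ratio $\alpha/\beta$, so the statement is only meaningful when $\alpha_k/\beta_k$ is constant or convergent in $(0,\infty)$; under that (implicit) assumption one controls the $\limsup$ of the single rescaled functional $\lambda\ru(u_k)+(1-\lambda)(\beta_k/\alpha_k)\rw(w_k)$ and only afterwards separates the two limits using the weak lower semicontinuity of each term. With that adjustment — which is really a gap in the theorem's hypotheses as much as in your argument — the remaining steps, including the strong-convexity upgrade via $\ru(u_m)\to\ru(\bar u)$ and $u_m\rightharpoonup\bar u$, are sound.
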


The last result about 
the existence of an $\ruw$-minimizing solution
is essentially due to \cite{Hofmann2007}.
\begin{lemma}
Assume (A1)-(A4).
If there exists a solution of \refe{eq:general_operator_eq},
then there exists an $\ruw$-minimizing solution for any $\lambda \in [0,1]$.
\end{lemma}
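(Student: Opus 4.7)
The plan is to apply the direct method of the calculus of variations, adapting the classical argument (as in \cite{Hofmann2007}) to the two-variable functional $\ruw$ on $U\times W$ with the constraint $F(\lambda u + (1-\lambda)w) = v$. First I would check feasibility: if $u^\dag$ solves $F u = v$, then $u^\dag \in \D \subset U \subset W$, so taking $(u,w) = (u^\dag, u^\dag)$ yields $z = u^\dag$ and $F(z) = v$. Hence the infimum
\begin{equation*}
    m := \inf\{\ruw(u,w,\lambda) : F(\lambda u + (1-\lambda)w) = v\}
\end{equation*}
is finite and bounded above by $\lambda\alpha\ru(u^\dag) + (1-\lambda)\beta\rw(u^\dag)$, which is finite because $\ru(u^\dag) < \infty$.

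Next I would pick a minimizing sequence $\{(u_n, w_n)\}$ with $F(z_n) = v$, $z_n := \lambda u_n + (1-\lambda)w_n$, and $\ruw(u_n,w_n,\lambda) \to m$. For $\lambda \in (0,1)$ the coercivity estimates \refe{eq:w_bound} and \refe{eq:u_bound} obtained in the proof of Lemma \ref{lm:well-possedness} give boundedness of $\{u_n\}$ in $U$ and $\{w_n\}$ in $W$. By reflexivity of $U$ and $W$ (assumption (A1)), one can extract subsequences with $u_n \rightharpoonup \bar u$ in $U$ and $w_n \rightharpoonup \bar w$ in $W$. Since $U$ is a closed subspace of $W$ and the inclusion is continuous, $u_n \rightharpoonup \bar u$ also in $W$, and therefore $z_n \rightharpoonup \bar z := \lambda \bar u + (1-\lambda)\bar w$ in $W$.

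The feasibility of $(\bar u, \bar w)$ is the key point: by the strong continuity of $F$ in assumption (A2), $F(z_n) \to F(\bar z)$ strongly in $V$, but $F(z_n) = v$ for every $n$, so $F(\bar z) = v$. For optimality, $\ru$ and $\rw$ are convex (by assumptions (A3), (A4)) and continuous, hence weakly lower semicontinuous on $U$ and $W$ respectively, so
\begin{equation*}
    \ruw(\bar u, \bar w, \lambda)
    \leq \liminf_{n\to\infty} \ruw(u_n, w_n, \lambda) = m.
\end{equation*}
Thus $(\bar u, \bar w)$ attains the minimum and is the sought $\ruw$-minimizing solution.

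The boundary cases $\lambda = 0$ and $\lambda = 1$ reduce to the classical single-variable result of \cite{Hofmann2007}: only one of the regularizations appears in $\ruw$ and the constraint becomes $F(w) = v$ or $F(u) = v$. The argument goes through verbatim using only the coercivity and weak lower semicontinuity of the relevant $\rw$ or $\ru$. The main subtlety I anticipate is justifying that the weak limit $\bar u$ obtained in $U$ behaves correctly when plugged into $F$, which requires that weak convergence in $U$ transfers to weak convergence in $W$; this is precisely what the continuous embedding $U \hookrightarrow W$ gives, and then strong continuity of $F$ does the rest.
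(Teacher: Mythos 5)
Your proposal is correct and follows essentially the same route as the paper: a minimizing (feasible) sequence, boundedness via the coercivity estimates \refe{eq:u_bound} and \refe{eq:w_bound}, weak compactness from reflexivity, weak lower semicontinuity of the convex regularizers, and strong continuity of $F$ to preserve the constraint $F(\bar z)=v$ in the limit. The paper merely phrases this as a proof by contradiction (assuming the infimum is not attained) rather than as the direct method, and like you it handles $\lambda=0,1$ by reduction to the single-variable case.
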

\begin{proof}
    Let $v^\delta=v$ in \refe{eq:tikhonov_functional_with_continuation}
    and consider  the case when $\lambda \in (0,1)$.
    Suppose for the sake of contradiction that
    there does not exist an $\ruw$-minimizing solution in $\D\times \D(F)$. 
    Then there exists a sequence $\{(u_k, w_k)\}$
    of solutions of \refe{eq:general_operator_eq} in $\D\times\D(F)$
    such that
    $\ruw(u_k, w_k, \lambda)\to c$ and
    \begin{equation}\label{proof_ruv_min}
    \eqalign{
	&c < \ruw(u, w, \lambda)\quad \\
	\mbox{ for all } (u,w) &\in U \times V 
	\quad\mbox{satisfying}\quad
	F(\lambda u+(1-\lambda) w)=v.}
    \end{equation}
    For a sufficiently large $k$, it follows that
    $\tab(u_k,w_k, \lambda)=\ruw(u_k, w_k, \lambda)<2c$, 
    and so we see by \refe{eq:u_bound} and \refe{eq:w_bound} that
    \begin{equation}
    C\left(\lambda\alpha\norma{u_k}^2_U + (1-\lambda)\beta\norma{w_k}^2_W\right)
\leq 2c. 
    \end{equation}
    One can thus extract a weakly convergent subsequence, 
    again denoted by $\{(u_k,w_k)\}$, with the limit $(\overline u, \overline
w)$.
    The weak lower semicontinuity of $\ruw$ implies that 
    $\ruw(\overline u, \overline w) \leq \lim \inf_{k\to \infty} \ruw(u_k, w_k,
\lambda) =c$. 
    
    However, the map $F$ is strongly continuous and hence the equality
    $F(\lambda u_k+(1-\lambda) w_k)=v$ forces $F(\lambda \overline u
+(1-\lambda)\overline w)=v$,
    which is the contradiction to \refe{proof_ruv_min}.
    
    The case when $\lambda =0$ and $\lambda =1$ goes along the same lines. One
has to consider only
    $\rw$ and $\ru$ functionals with corresponding $\rw$-minimizing solution
and 
    $\ru$-minimizing solution, respectively.
\end{proof}


\section{Piecewise-constant parameter identification problems}\label{sec:BM}
Our motivation to study minimizers of
\refe{eq:tikhonov_functional_with_continuation} comes from piecewise-constant
parameter identification problems (PIPs). We analyze partial differential
equation (PDE) constrained problems with the unknown parameter being a
coefficient of the PDE-constraint. 

For illustration purposes we consider merely
a double-valued piecewise-constant parameter
\begin{equation}\label{eq:parameter_PC}
    \spc = \sigma_1\chi_D +\sigma_2 \chi_{\domain/D},\quad \sigma_1,\sigma_2 \in
\RR,
\end{equation}
where the domain $\Omega$ is an open bounded set, on which the PDE-constrained
problem is defined. The symbols $\chi_D$ and $\chi_{\domain/D}$  stand for the
characteristic function of subset $D\subset \domain$ and its complement,
respectively.  The goal is to find the subdomain $D$ and the unknown numbers
$\sigma_1$ and $\sigma_2$ based on suitable  observations of the state variable
of the PDE-constraint. A classical example here is the problem of inverse
electric impedance tomography (EIT).

We are primary concerned by building an robust and efficient numerical
algorithm to recover the unknown $\spc$. In the case of EIT, the problematic is
extensively studied in the literature, see a comprehensive review
\cite{Borcea2002}.

Why do we look for the solution in the space of
piecewise constant functions? Such a choice is natural, given a problem like
EIT. First, this class of functions is rich enough in order to be applicable.
Second, as in the case of EIT, one usually has only a finite number of
measurements on the boundary $\bndry$ corresponding to the Neumann-to-Dirichlet
operator. For a two-dimensional domain $\domain$, these measurements are
one-dimensional. It is reasonable to assume, that we can successfully recover at
most a one-dimensional unknown inside the domain. \footnote{We do not claim that
certain two-dimensional recovery is impossible.} This is precisely, what one
does by considering \refe{eq:parameter_PC}. The goal is as a matter of fact to
find the interface between the two regions of $\domain$. It is the choice of
space plays a role of regularization.

$\mathbf{U=\BV}$: The most suitable type of regularization for
piecewise-constant parameter identification problems is the $\BV-$
regularization \cite{Acar1994}.
The space $\BV$ is the subspace of functions $u \in L^1(\Omega)$ such
that the quantity 
\begin{displaymath}
J(u,\Omega)=\sup\left\{\int_\Omega u(x) \dvr\xi(x) \ud x\  |\  \xi\in
C^\infty_c(\Omega, \mathbb R^n), \|\xi\|_{L^\infty(\Omega, \mathbb R^n)}\leq
1\right\},
\end{displaymath}
is finite, where $
C^\infty_c(\Omega, \mathbb R^n)$ is the set of smooth functions in 
$C^\infty(\mathbb
R^n)$ with compact support in $\Omega$.
Endowed with the norm 
\begin{equation}
\norm{u}{BV(\Omega)}:=\norm{u}{L^1(\Omega)} + J(u,\Omega),
\end{equation}
it is a
Banach space. 

Tikhonov regularization formulation for the piecewise-constant PIP 
then reads as
\begin{equation}\label{eq:tikhonov_functional_tts}
    \Ta(\spc) := \norm{\F(\spc)-\vd}{V}^2 + \alpha \norm{\spc}{BV(\Omega)}^2,
\end{equation}
where  $F$ is the operator associated with the forward problem. This functional
is a particular case of the functional \refe{eq:tikhonov_functional} from the
introduction when we set $U=\BV.$

\subsection{State of the art of geometry (shape)
identification}\label{sec:sota_of_geom_identification}

In case the constants $\sigma_1$ and $\sigma_2$ in \refe{eq:parameter_PC} are
identified, 
the piecewise-constant parameter $\sigma$ estimation is equivalent to the
geometry identification of the subdomain $D.$

The classical methods to identify the structural information are mostly
based on a study of the sensitivity of certain cost functional to a
infinitesimal change of the
shape of the structure itself, see \cite{Pironneau1984} and the references
therein. This shape sensitivity approach yields eventually to the notion of
\emph{shape derivative} \cite{Sokolowski1992}.

The methods based on the shape sensitivity approach, level set method
parameterizations including \cite{Santosa1996, Fang2003}, are updating the shape
of domain first, not the topology. The topology is prescribed a priori by an
initial guess. The choice of a good initial guess becomes very important for the
method to converge to the optimal shape. Even if some proposed (and well
designed) algorithms are able to find the optimal shape \cite{Chan2005}, the
convergence
is usually very slow. The speed of the convergence is again strongly dependent
on a good initial guess.

The second class of methods is based on the \emph{homogenization} theory, see
the pioneering work \cite{Bendsoe1988} or the monograph \cite{Allaire2002}. The
optimal geometry is obtained in an enriched space of composite designs. The
corresponding classical design can be retrieved via thresholding or
penalization. This approach overcomes some restrictions of the
classical shape sensitivity approach. Both the topology and shape are optimized
at once. The final acquired geometries are close to the optimal onces.
Unfortunately, this approach is limited to certain types of
problems and its rigorous application is a non-trivial task.

A method based on an iterative inclusion of new holes (so called
``bubbles'') into the geometry was investigated in \cite{Eschenauer1994}. This
idea is actually closely related to the one of the homogenization approach.
In \cite{Schumacher1996}, a pointwise limit of such inclusions was used in
linear elasticity to find a optimal design characterized by the so-called
compliance functional. The importance of this contribution was recognized in
\cite{Sokolowski1997, Sokolowski1999, Sokolowski1999a}, where the idea was
extended to shape functionals and the notion of \emph{topological derivative}
was introduced and further developed. Since the introduction of the topological
derivative, a great number of
contribution were made using this concept both in science and in engineering. We
are interested particularly in those where topological and shape sensitivity
concepts are used in
conjunction.

In \cite{Burger2004a} the authors first considered the shape derivative
based level set method (LSM). The motion of the interface described by the LSM
is governed by a non-linear Hamilton-Jacobi equation, where speed is dependent
on shape derivative of the cost functional, as usual. The idea was to introduce
a new source term into the Hamilton-Jacobi equation, dependent on the
topological derivative. This term allows for nucleation of new holes in the
domain. The approach was generalized in \cite{He2007}.

In \cite{Allaire2004} the authors study shape derivative based level set method
for structural optimization. They do not use the topological derivative in the
work itself, but, to our best knowledge, for the first time the topological
derivative is suggested to be used for initialization of the algorithms based on
the shape sensitivity approach. They study the idea in \cite{Allaire2005}, where
an alternating algorithm using both the shape and the topological derivatives is
proposed.

In \cite{Nielsen2007} the authors propose a variant of a binary level set
approach for solving elliptic problems with piecewise constant coefficients. The
inverse problem is solved by a variational augmented Lagrangian approach with a
total variation regularization. Their implementation was able to recover rather
complicated geometries without assuming anything about $D$ a priori, i.e.
without any initial guess. As we will understand later on, it is due to the
nature of the augmented Lagrangian approach which imposes the piecewise constant
constraint gradually. The results of \cite{Nielsen2007} are applied to
piecewise constant level set method (PCLSM) parametrization in \cite{Zhu2011}.
They are employed to study an optimization problem. The PCLSM methods for the
identification of discontinuous parameters in ill-posed problems are considered
in \cite{Cezaro2013}. Both a Tikhonov regularization approach using operator
splitting techniques and an augmented Lagrangian approach are introduced and
analyzed.

In \cite{Hintermuller2008} topological sensitivity based initial guess is used
as starting point for shape-sensitivity level set method to solve an electric
impedance tomography problem.

\subsection{Topology-to-shape continuation method}
\label{sec:tscm}
In this section we introduce a continuation approach to shape identification
which combines topology and shape sensitivities. 

The main idea is based on the following reasoning. Roughly speaking topological
properties of a particular shape are those which stay invariant under various 
\emph{continuous transformations}\footnote{ In our case, the ``shape'' of the
piecewise constant $\sigma$ defined by
\refe{eq:parameter_PC}, the topology is determined by the number of connected
components of $D$ and their equivalent classes (ball, torus etc.).}. A shape
itself is a certain topology modified by those continuous boundary-like
transformations, see the above section. 
Therefore, the topology is the ``coarse'' information about a particular shape.
In this line of reasoning, it is intuitive to first look for the topology itself
and to consider \emph{continuation} methods to transform it to the particular
shape.

We will consider the relaxed parametrization of $\spc$
\begin{equation}\label{eq:sigma_tscm}
    \sigma = (1-\lambda)\sldva + \lambda\spc 
\end{equation}
analogously to \refe{eq:wl}.
We assume that $\sldva \in L^2(\domain)$, because
the space $U=BV(\Omega)$ is included at most in $W=L^2(\Omega)$, in the case if
the domain $\Omega \subset \mathbb R^2$.

The function $\sldva$ can be interpreted as topological derivative. It is
almost everywhere locally defined and represents the distribution of the mass in
$\domain.$ The optimization with respect to $\sldva$ means adding and removing
mass locally
at a given point in the domain. On the other hand,
the optimization with respect to $\spc$ is driven by shape derivative flux and
moves only the interface $\partial D$.

The regularization functional \refe{eq:reg_func_continuation} becomes
\begin{equation}\label{eq:ruw_tscm}
    \ruw(\spc, \sigma_{L_2}, \lambda) =
(1-\lambda)\beta\norm{\sldva}{L^2(\Omega)}^2 +
\lambda\alpha\norm{\spc}{BV(\Omega)}^2.
\end{equation}
The $\R_W = \norm{\cdot}{L^2(\Omega)}^2$ trivially fulfills the assumption (A4).
The assumption (A2) is dependent on the specific forward problem. For magnetic
induction tomography 
it will be established in Section \ref{sec:MIT}. The problematic assumptions are
(A1) and (A3). First, the space $\BV$ is not reflexive. A direct remedy is to
approximate $\BV$ by 
its reflexive subspace $W^{1+\eta}(\domain)$, $0<\eta\ll 1,$ which resolves also
the non-differentiability of $BV-$norm. The second possibility is to follow the
analysis in \cite{Acar1994}. 
There, the convergence in $\BV$ is understood in weaker then norm topology,
namely in $L^p-$sense \footnote{Interestingly, it is the topology of $W$.}. The
seminorm $J(\sigma)$ in $\BV$ is 
furthermore efficiently approximated by the functional (\cite[Theorem
2.2]{Acar1994})
\begin{equation}\label{eq:bv_functional}
  J_\varepsilon(\sigma) = \int_\domain \sqrt{|\nabla \sigma|^2 +
\varepsilon}\dx, \qquad
\varepsilon>0,
\end{equation}
which is differentiable everywhere. We note that $\varepsilon$ will be used
subsequently in different situations and it always represents a small positive
number.

We conclude that for the admissible forward operator $F$ the topology-to-shape
continuation method lies within the proposed continuation framework (Section
\ref{sec:introduction} and \ref{sec:framework}).

\subsubsection{Contributions of TSCM}

Despite all the effort in combining topology and shape sensitivity concepts and
some very positive results as stated in Section
\ref{sec:sota_of_geom_identification}, no clear idea has yet been presented how
these concepts could be unified in one framework. We quote
\cite{Fulmanski2008}:``It is still an open problem to devise how the
combination of boundary variations and singular perturbations of geometrical
domains enters in a general framework of shape optimization.'' We think that the
idea of continuation extension of Tikhonov regularization presented in this
article provides a framework that connects both concepts. We first identify the
optimal distribution of the unknown parameter which represents the topology. We
then continuously recast this information to the optimal shape. We use no
singular perturbations of the geometry. As a consequence, the difficulties in
coupling the local and global sensitivity concepts vanish. We remark that the
approach of singular perturbations of the geometry \cite{Allaire2005} is more
general. It allows to adapt the topology explicitly during the algorithm's
execution.

The numerical experiments in Section \ref{sec:numer_exp} show that the method
is, at least in certain settings, a globally convergent one. However, we have
been able to proof only a local convergence of TSCM, not the global one.

Let us quote also from \cite{Tai2007}, where a penalty method is used to solve
piecewise constant parameter identification problems:``From our numerical
experiences, we find that it is better to neglect the regularization term at the
beginning stage of the iteration. At this stage, we should let the
output-least-squares term to drag $\phi$\footnote{piecewise constant level set
function} into the right direction without thinking about the regularity of
$q$\footnote{coefficient to be recovered}.'' In the context of continuation it
is easy to explain this observation from \cite{Tai2007}. The minimization
without total variation regularization term essentially behaves as Landweber
type of regularization method, where the number of iterations plays the role of
regularization \cite{Engl1996}, and the method converges to the least square
solution in $L^2-$sense. Gradually increasing regularization parameter in the
front of the total variation term functions as the continuation parameter
$\lambda.$ The same insight explains the global convergence of augmented
Lagrangian methods \cite{Cezaro2013}. The advantage of the continuation approach
is that the relaxed space $W$ does not have to be $L^2(\domain).$


\section{Magnetic induction tomography}\label{sec:MIT}

In this section we apply the framework to an inverse problem in magnetic
induction tomography
(MIT).

MIT is a non-invasive visualization technique, which is a very
promising member of the broader electromagnetic imaging
family. It has many potential
applications, for instance non-destructive testing, industrial and medical
imaging \cite{Griffiths2001}.
We refer the reader to the paper \cite{Soleimani2008} for a comprehensive
review.
Magnetic induction tomography is a non-contact technique,
in contrast to widely studied electrical impedance tomography 
\cite{Cheney1999}. Another advantage of MIT is its explicit frequency
dependence,
which allows for more accurate reconstruction of the body properties
\cite{Brunner2006}.

\subsection{Mathematical formulation}

We proceed to the mathematical description of MIT. Electromagnetic phenomena in
general are governed by the Maxwell equations.
Considering the linear isotropic case, the time-harmonic regime with
the angular velocity $\omega>0$ and making use of the magnetic vector potential
$\vct A$ ($\vct{B}=\rot\vct{A}$), we can write them in the form
\begin{equation}\label{eq:potential}
\begin{array}{rcl}
  \rot(\muinv\rot\vct{A}) + i\omega(\sigma + i\omega\epsilon)\vct{A} &=&
\vct{J}_e, \\
  \dvr(\epsilon\vct{A}) &=& 0.
\end{array}
\end{equation}
The scalar potential $V$ is eliminated by the temporal gauge.
The permeability $\mu$ and the permittivity $\epsilon$ are known
strictly
positive scalar functions of the space variable.
The conductivity $\sigma$ is assumed to be positive in the imaged body and it
vanishes in the surrounding non-conducting region;
$\vct{J}_e$ stands the applied current from the excitation coil.
For more on various MIT models we refer to \cite{Zolgharni2009, Soleimani2008}.

We formulate a simplified MIT boundary value problem.
Let $\domain$ be a bounded two-dimensional domain  in the $xy$-plane
with the sufficiently smooth boundary $\partial\domain=:\bndry$.
It represents a cross section of the imaged body.
Assume that the applied current $\vct{J}_e$ is perpendicular 
to $xy$-plane and does not depend on $z$-coordinate.
The induced eddy currents can be then described by the $z$-component of the
potential
$\vct{A}$ which we will simply denote by $A$. We restrict ourselves to the
imaged body region, where the conductivity is strictly positive,
$\sigma\geq \sigma_{\min} >0$. The domain source $\vct{J}_e$ is modeled by a
boundary source $e$, 
which is imposed via the Neumann boundary condition on
$\bndry$. The corresponding experimental setup is depicted in Figure
\ref{fig:MIT}. For an experimental realization see \cite{Korjenevsky2000}.

\begin{figure}[t]
  \includegraphics[width=0.9\columnwidth]{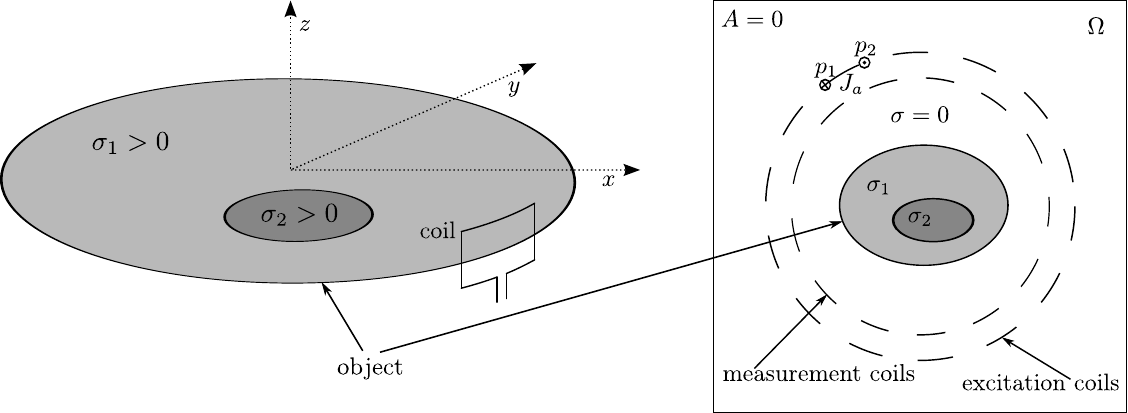}
  \caption{Magnetic induction tomography setup}
  \label{fig:MIT}
\end{figure}

We use the eddy current approximation of the Maxwell equations, where the
displacement current term $i\omega\epsilon \vct{A}$ in \refe{eq:potential} is
disregarded. The state variable $A$ then satisfies the forward problem
\begin{equation}\label{eq:MIT}
\begin{array}{rcl}
  \nabla\cdot(\mu^{-1}\nabla A) + i\omega\sigma A &=& 0 \qquad \mbox{ in }
\domain, \\ \nonumber
  \muinv\nabla A \cdot \normal &=& e  \qquad \mbox{ on } \bndry.
\end{array}
\end{equation}
Let us remark, that under physiological conditions for higher
excitation frequencies $\omega$ the
displacement current term can have a significant contribution
and has to be taken into account.

\subsection{Forward problem}\label{sec:mitforward}

We now show that the MIT forward problem satisfies the assumption (A2) of
Section \ref{sec:framework}. 

Let us first introduce some notation.
The standard scalar product of two complex valued functions in the space
$L^2(\domain)$
is denoted by $(u, v) = \int_\domain u(x) \overline{v(x)}\dx$.
We write $\|u\| = \sqrt{(u,u)}$ for the induced norm.
The subscript $\bndry$ indicates integration over the boundary in
$L^2(\bndry)$-sense.
The symbol $H^1(\domain)$ stands for the Sobolev space
of the complex-valued functions with first weak derivatives.
It is compactly embedded in the all Lebesgue spaces but $L^\infty(\domain)$
(e.g. \cite[Theorem 5.8.2]{Kufner1977}):
\begin{equation}\label{SobolevEmbedding}
    H^1(\domain) \hookrightarrow\hookrightarrow L^q(\domain)
    \quad\mbox{ for any } q\in[1,\infty). 
\end{equation}

The weak formulation of \refe{eq:MIT} reads as
\begin{equation}\label{eq:MITweak}
  \scal{\muinv\nabla A}{\nabla \vphi} + \scal{i\omega\sigma A}{\vphi} 
  = \scal{e}{\vphi}_\bndry 
  \qquad \forall \vphi \in H^1(\domain).
\end{equation}
This variational problem defines
the \emph{impedance map} $\im$, the so-called Neumann-to-Dirichlet map
\begin{equation}\label{eq:NtDmap}
  \im: (\sigma, \omega, e) \mapsto A|_\bndry.
\end{equation}

\begin{lemma}\label{lem:impedance1}
The impedance map
\begin{equation*}
  \im:\sigma \mapsto \im(\sigma)=A|_\bndry,
\end{equation*}
where the function $A$ is the solution of the problem \refe{eq:MITweak} for any
$e \in L^2(\bndry)$ and $\omega>0$ fixed,
is a well-defined and strongly continuous map from the set
\begin{equation*}
    M=\left\{ \sigma \in L^q(\domain), q >1:
    \sigma \geq \sigma_{\min}>0\right\}.
\end{equation*}
to the space $L^2(\bndry)$.
\end{lemma}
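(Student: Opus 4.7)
The plan is two-fold: first to establish well-definedness by proving existence and uniqueness of $A \in H^1(\domain)$ solving \refe{eq:MITweak} for each $\sigma \in M$, and then to deduce strong continuity by a weak--strong compactness argument exploiting the compact Sobolev embedding \refe{SobolevEmbedding} and the compactness of the trace $H^1(\domain)\to L^2(\bndry)$.

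For existence and uniqueness I would apply the complex Lax--Milgram theorem to the sesquilinear form $a(A,\vphi) = \scal{\muinv\nabla A}{\nabla\vphi} + \scal{i\omega\sigma A}{\vphi}$ on $H^1(\domain)$. Continuity of $a$ follows from the essential boundedness of $\muinv$ and from a three-factor Hölder estimate of the lower-order term by $\|\sigma\|_{L^q}\|A\|_{L^{2q'}}\|\vphi\|_{L^{2q'}}$, with $q'=q/(q-1)<\infty$ and both Lebesgue norms controlled by the $H^1$-norm through \refe{SobolevEmbedding}. Coercivity splits into real and imaginary parts of $a(A,A)$: the real part is bounded below by a constant multiple of $\|\nabla A\|^2$ by strict positivity of $\muinv$, while the imaginary part is bounded below by $\omega\sigma_{\min}\|A\|^2$ by the assumption $\sigma\geq\sigma_{\min}$, so $|a(A,A)|\geq C\|A\|_{H^1}^2$ uniformly in $\sigma\in M$. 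The functional $\vphi\mapsto\scal{e}{\vphi}_\bndry$ is continuous on $H^1(\domain)$ by the trace theorem, hence Lax--Milgram yields a unique $A\in H^1(\domain)$ satisfying the $\sigma$-uniform a priori bound $\|A\|_{H^1}\leq C\|e\|_{L^2(\bndry)}$.

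For strong continuity, assume $\sigma_n\rightharpoonup\sigma$ in $L^q(\domain)$ with $\sigma_n\in M$; the constraint $\sigma_n\geq\sigma_{\min}$ is inherited by the weak limit, so $\sigma\in M$. The uniform energy estimate implies that the associated $A_n$ are bounded in $H^1(\domain)$, so along a subsequence $A_{n_k}\rightharpoonup A^*$ in $H^1(\domain)$. By \refe{SobolevEmbedding} we get $A_{n_k}\to A^*$ strongly in every $L^r(\domain)$ with $r<\infty$, and by compactness of the trace operator $A_{n_k}|_\bndry\to A^*|_\bndry$ strongly in $L^2(\bndry)$. Passing to the limit in the weak formulation tested against an arbitrary $\vphi\in H^1(\domain)$ identifies $A^*$ as the unique solution corresponding to $\sigma$; a subsequence-subsequence argument then propagates the convergence to the whole sequence, and strong convergence of traces concludes the proof.

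The sole delicate point is taking the limit of the bilinear expression $\scal{i\omega\sigma_{n_k}A_{n_k}}{\vphi}$, where both factors vary. The remedy is a weak--strong pairing: since $H^1(\domain)\hookrightarrow\hookrightarrow L^{2q'}(\domain)$, the products $A_{n_k}\overline{\vphi}$ converge strongly in $L^{q'}(\domain)$ by Hölder, which combined with $\sigma_{n_k}\rightharpoonup\sigma$ in $L^q(\domain)$ enables passage to the limit in the $L^q$--$L^{q'}$ duality. The hypothesis $q>1$, i.e.\ $q'<\infty$, is exactly what makes this pairing legitimate and is the reason why the pointwise lower bound alone is not enough to place $\sigma$ in the domain of $\im$.
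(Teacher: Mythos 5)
Your proposal is correct, and the overall architecture coincides with the paper's: Lax--Milgram for the sesquilinear form gives well-definedness, and strong continuity is obtained by first establishing $A_n\rightharpoonup A$ in $H^1(\domain)$ and then invoking compactness of the trace $H^1(\domain)\to H^{1/2}(\bndry)\hookrightarrow\hookrightarrow L^2(\bndry)$. Where you genuinely diverge is in how the weak convergence $A_n\rightharpoonup A$ is produced. The paper subtracts the two variational identities and works with the \emph{difference equation}
\begin{equation*}
\scal{\muinv\nabla(A-A_n)}{\nabla\vphi}+\scal{i\omega\sigma(A-A_n)}{\vphi}=\scal{i\omega(\sigma_n-\sigma)A}{\vphi},
\end{equation*}
whose left-hand side is a \emph{fixed} coercive form (the coefficient is the limit $\sigma$) and whose right-hand side is a linear functional tending to zero pointwise on $H^1(\domain)$ because $A\overline{\vphi}\in L^{q/(q-1)}(\domain)$; weak convergence of $A_n$ then falls out of the bounded solution operator without any compactness extraction, and the only product of two varying sequences never appears. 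You instead run the standard compactness machine: uniform energy bound, weak subsequential limit, identification of the limit by passing to the limit in the original formulation, uniqueness, and a subsequence--subsequence argument. The price you pay is precisely the ``delicate point'' you flag --- the term $\scal{i\omega\sigma_{n_k}A_{n_k}}{\vphi}$ with both factors varying --- which you resolve correctly via the weak--strong pairing $\sigma_{n_k}\rightharpoonup\sigma$ in $L^q$ against $A_{n_k}\overline{\vphi}\to A^*\overline{\vphi}$ in $L^{q'}$, using the compact embedding \refe{SobolevEmbedding}. Both arguments are sound; the paper's is shorter and avoids subsequences entirely, while yours is more self-contained (it does not presuppose that the candidate limit $A$ is already in hand) and generalizes more readily to settings where the difference equation would not be linear in $A-A_n$. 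Your closing observation that $q>1$ (hence $q'<\infty$) is what legitimizes the duality pairing matches the role this hypothesis plays in the paper's bound on the right-hand side.
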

\begin{proof}
    
    The  Sobolev embedding \refe{SobolevEmbedding}
    implies that term in \refe{eq:MITweak} containing $\sigma$ makes sense for
    any $\sigma \in L^q(\domain),$ $q>1$.
    Given arbitrary $\sigma \in M$, the existence of a unique solution $A \in
H^1(\domain)$ 
    follows readily from the Lax-Milgram theorem for sesquilinear forms.  
      
    Let now $\sigma_n \rightharpoonup \sigma$ as $n\to\infty$.
    It holds that $\sigma\in M$, because $M$ is closed and convex.
    Denote by $A_n$ and $A$ the corresponding solutions 
    of \refe{eq:MITweak} 
    for $\sigma_n$ and the weak limit $\sigma$ respectively.
    The subtraction of the variational formulas from each other gives
    \begin{equation*}
	\scal{\muinv\nabla(A-A_n)}{\nabla \vphi} +
	\scal{i\omega\sigma(A-A_n)}{\vphi}
	=\scal{i\omega(\sigma_n-\sigma)A}{\vphi}.
    \end{equation*}
    The sesquilinear form on the left hand side is equivalent to the
    $H^1(\domain)$-scalar product which leads to 
    a one-to-one correspondence between test functions $\vphi$
    and linear functionals on $H^1(\domain)$.
    Since $A\vphi \in L^{q/(q-1)}(\domain)$,
    the right hand side tends to zero for any $\vphi \in H^1(\domain)$
    as $n\to\infty$.
    We hence see that 
    \begin{equation*}
	A_n \rightharpoonup A \quad\mbox{ in } H^1(\domain).
    \end{equation*}
    It follows from continuity of the trace mapping $H^1(\domain) \to
H^{1/2}(\bndry)$
    and the compact embedding $H^{1/2}(\bndry) \hookrightarrow\hookrightarrow
L^2(\bndry)$, 
    that
    \begin{equation*}\
	A_n \to A \quad \mbox{ in } L^2(\bndry).
    \end{equation*}
\end{proof}


The differentiation of \refe{eq:MITweak} at $\sigma$ in the direction $h$ yields
\begin{equation}\label{eq:MITsensitivity}
  (\muinv\nabla \delta A, \nabla \vphi) + (i\omega\sigma \delta A, \vphi) =
-(i\omega hA, \vphi)
  \qquad \forall \vphi \in H^1(\domain).
\end{equation}
The symbol $\delta A:=\delta A(\sigma;h)$ stands for 
the \emph{variation} (G\^ateaux differential) of $A=A(\sigma)$ in the direction
$h$.
The variation $\delta A$ is sometimes called the sensitivity of $A$
and \refe{eq:MITsensitivity} the \emph{sensitivity equation},
which is a well-posed problem with the unique solution $\delta A$  
for any $h$ from $L^q(\domain), q>1$. It is straightforward to verify 
that for given $\sigma$ the mapping $ h \mapsto \delta A(\sigma; h)|_\bndry$ is
linear 
and bounded operator in $L(M, L^2(\bndry))$. Recalling the relationship between
the variation and Fr\'echet derivative, we see that $\im$ is Fr\'echet
differentiable at $\sigma$ and 
\begin{equation*}
    \im'(\sigma)h=\delta A(\sigma; h)|_\bndry.
\end{equation*}
The map $\im':M \to L(M, L^2(\bndry))$ is continuous in $\sigma$ by the similar
reasoning
as in the proof of Theorem \ref{lem:impedance1}
and so we have the following assertion.
\begin{lemma}\label{lem:impedance2}
The impedance map $\im: M\to L^2(\bndry)$ is $C^1$-Fr\'echet differentiable.
\end{lemma}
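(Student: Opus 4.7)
The discussion preceding the lemma has already established, via the sensitivity equation \refe{eq:MITsensitivity}, that for each $\sigma\in M$ the map $\im$ is G\^ateaux (hence Fr\'echet, by boundedness of $h\mapsto\delta A(\sigma;h)|_\bndry$) differentiable with $\im'(\sigma)h=\delta A(\sigma;h)|_\bndry\in L(M,L^2(\bndry))$. The remaining task is to show that $\sigma\mapsto\im'(\sigma)$ is continuous from $M$ into $L(M,L^2(\bndry))$ endowed with the operator norm. The plan is to derive an estimate of the form $\|\im'(\sigma_n)-\im'(\sigma)\|_{L(M,L^2(\bndry))}\lesssim \|\sigma_n-\sigma\|_{L^q(\domain)}$ when $\sigma_n\to\sigma$ in $L^q(\domain)$, which yields the desired continuity (indeed local Lipschitz continuity).

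First, I would sharpen the strong continuity argument from Lemma \ref{lem:impedance1} to a quantitative one. Subtracting \refe{eq:MITweak} for $\sigma_n$ and $\sigma$ and testing by $\vphi=A_n-A$ gives
\begin{equation*}
    \scal{\muinv\nabla(A_n-A)}{\nabla(A_n-A)}+\scal{i\omega\sigma(A_n-A)}{A_n-A}
    =-\scal{i\omega(\sigma_n-\sigma)A_n}{A_n-A}.
\end{equation*}
Taking real and imaginary parts of the coercive sesquilinear form on the left, and applying H\"older together with the two-dimensional Sobolev embedding $H^1(\domain)\hookrightarrow L^{2q/(q-1)}(\domain)$ from \refe{SobolevEmbedding} on the right, produces $\|A_n-A\|_{H^1(\domain)}\lesssim \|\sigma_n-\sigma\|_{L^q(\domain)}$, using the uniform bound on $\|A_n\|_{H^1}$ that follows from a test of $A_n$ against itself.

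Next I repeat this device for the sensitivity equation. Subtracting \refe{eq:MITsensitivity} written at $\sigma_n$ and $\sigma$ (both with the same direction $h$) gives, for $\delta A_n:=\delta A(\sigma_n;h)$ and $\delta A:=\delta A(\sigma;h)$,
\begin{equation*}
    \scal{\muinv\nabla(\delta A_n-\delta A)}{\nabla\vphi}+\scal{i\omega\sigma(\delta A_n-\delta A)}{\vphi}
    =-\scal{i\omega(\sigma_n-\sigma)\delta A_n}{\vphi}-\scal{i\omega h(A_n-A)}{\vphi}.
\end{equation*}
Testing with $\vphi=\delta A_n-\delta A$ and using the uniform a priori bound $\|\delta A_n\|_{H^1(\domain)}\lesssim\|h\|_{L^q(\domain)}$ (obtained by testing \refe{eq:MITsensitivity} against $\delta A_n$ itself and applying the same embedding), together with the previous estimate for $A_n-A$, yields
\begin{equation*}
    \|\delta A_n-\delta A\|_{H^1(\domain)}\lesssim \|h\|_{L^q(\domain)}\,\|\sigma_n-\sigma\|_{L^q(\domain)}.
\end{equation*}
The continuity of the trace operator $H^1(\domain)\to L^2(\bndry)$ then transfers this to $\|\im'(\sigma_n)h-\im'(\sigma)h\|_{L^2(\bndry)}$, and taking the supremum over $\|h\|_{L^q}\le 1$ concludes $C^1$-Fr\'echet differentiability.

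The only delicate point is to ensure all Hadamard/H\"older estimates are uniform in $h$ over the unit ball of the $L^q$-topology used to norm $M$; this is why I carefully choose the exponent pair $(q,2q/(q-1))$ so that the product $h\cdot(A_n-A)\cdot\vphi$ is integrable with every factor controlled by either $\|h\|_{L^q}$ or an $H^1$-norm absorbed via \refe{SobolevEmbedding}. Once this choice is fixed, the estimates are routine and the argument is essentially a quantitative version of the strong-continuity proof of Lemma \ref{lem:impedance1}.
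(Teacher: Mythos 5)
Your proposal is correct and follows essentially the same route as the paper: the derivative is identified through the sensitivity equation \refe{eq:MITsensitivity}, and continuity of $\sigma\mapsto\im'(\sigma)$ is obtained by subtracting the variational problems and estimating via H\"older and the embedding \refe{SobolevEmbedding} --- precisely the ``similar reasoning'' to Lemma \ref{lem:impedance1} that the paper invokes in one line. Your quantitative, operator-norm version of that step (with the uniformity in $h$ made explicit) is in fact more careful than the paper's sketch, and it is the right way to close the argument, since $C^1$-Fr\'echet differentiability needs norm continuity of $\im'$ rather than the pointwise, weak-to-strong continuity that a literal repetition of Lemma \ref{lem:impedance1} would give.
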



\subsection{Inverse problem}

By the inverse problem in MIT we will understand the reconstruction of the
piecewise-constant conductivity $\sigma$ in the imaged body based on a finite
number of
Dirichlet-to-Neumann data $(e,m)$ corresponding to the impedance map
\refe{eq:NtDmap}.
The boundary data $m$ are essentially voltage measurements associated with
excitations $e$.
Lemma \ref{lem:impedance1} implies that $\im$ is a compact operator
and so the recovery of $\sigma$ is inherently an ill-posed problem.

We employ the topology-to-shape continuation method (TSCM) from Section
\ref{sec:tscm} to solve MIT. We look for the conductivity $\sigma$ in the form
\refe{eq:sigma_tscm}, i.e.
\begin{equation*}
    \sigma = (1-\lambda)\sldva + \lambda\spc,
\end{equation*}
where $\spc$ is a double-valued piecewise constant function as it is considered
in Section \ref{sec:BM} for the example of electrical impedance tomography.
The associated continuation Tikhonov functional for MIT read as
\begin{equation}\label{eq:MITfunctional}
    \tab(\sigma) = \fid(\sigma) \,+ \ruw(\spc, \sigma_{L_2}, \lambda),
\end{equation}
where $\fid$ is the fidelity term
\begin{equation}\label{eq:MITfidelity}
    \fid(\sigma) = \int_\bndry |\im(\sigma, \omega, e)-m|^2 \dS.
\end{equation}
The regularization part $\ruw$ is given by 
\begin{equation}\label{eq:ruw_mit}
 \eqalign{
    \ruw(\spc, \sigma_{L_2}, \lambda) =& \lambda\alpha\Big[ \int_\domain
\sqrt{|\spc|^2+\varepsilon}\dx 
    + J_{\varepsilon}(\spc)\Big]\\%
    &+ (1-\lambda)\beta\norma{\sldva}^2,
}
\end{equation}
which complies with the TSCM analysis in Section \ref{sec:tscm}. The forward
problem operator $\Lambda$ of MIT is an admissible operator fulfilling 
assumption (A2) of Section \ref{sec:framework} as it is shown in Section
\ref{sec:mitforward}.
Altogether, the theory of Section \ref{sec:framework} is applicable to the
inverse problem of MIT as stated in this section.

\subsubsection{Adjoint problem}

In Section \ref{sec:numer_exp} we will use a gradient-based method (the steepest
descent method) to find a minimizer of \refe{eq:MITfunctional}. 
Let us express the derivative of fidelity term \refe{eq:MITfidelity} using an
\emph{adjoint variable}. The variation of $\fid$ in the direction $h$ reads as
\begin{equation*}
    \eqalign{
    \delta \fid(\sigma;h)
    &=\lim_{t\to0}\frac{\fid(\sigma+th)-\fid(\sigma)}{t}\\
    &= (\im(\sigma)-m, \delta \im(\sigma;h))_\bndry + (\delta \im(\sigma;h),
    \im(\sigma)-m)_\bndry\\ 
    &= 2\Re\left[(\delta \im(\sigma;h), \im(\sigma)-m)_\bndry\right],
    }
\end{equation*}
where the variation $\delta\im(\sigma;h)\equiv\delta A$ solves 
the sensitivity equation \refe{eq:MITsensitivity}.
We now introduce the adjoint variable $Z$ which satisfies
\begin{equation}\label{eq:MITadjoint}
    (\muinv\nabla \vphi, \nabla Z) + (i\omega\sigma \vphi, Z) = -(\vphi,
    \im(\sigma)-m)_\bndry
    \qquad \forall \vphi \mbox{ in }H^1(\domain),
\end{equation}
to establish that
\begin{equation}\label{eq:MITfunctional_derivative}
    \eqalign{
    \delta \fid(\sigma;h) 
    &= 2\Re\left[(\delta \im(\sigma;h), \im(\sigma)-m)_\bndry\right]\\
    &\mathop{=}\limits^{\refe{eq:MITadjoint}} 2\Re\left[-(\muinv\nabla \delta A,
\nabla Z) - (i\omega\sigma \delta A,
Z)\right]\\
    &\mathop{=}\limits^{\refe{eq:MITsensitivity}} 2\Re\left[ (i\omega hA,
Z)\right].}
\end{equation}

Let us note, that the variational problem \refe{eq:MITadjoint} for $Z$ is
uniquely solvable given the properties of the material parameters and of the
impedance map $\Lambda.$ We assume that $m\in L^2(\bndry).$


\subsection{Implementation of TSCM method}\label{sec:tscm_implementation}

In this section we describe the implementation of the topology-to-shape
continuation
method (TSCM) for the problem of the magnetic induction tomography.

The practical implementation of the TSCM algorithm presented in Algorithm
\ref{alg:1}
closely follows the theoretical exposition. The outer loop successively
increases the value of $\lambda$ by the increment $\Delta\lambda$ starting from
$\lambda = 0$. It terminates when $\lambda = 1$ is reached. The number of steps
is determined by $\Delta\lambda.$ The inner loop constitute
more or less a standard adjoint-variable based steepest descent algorithm 
for minimization of \refe{eq:MITfunctional} for the fixed $\lambda$. The
number $n$ stands for the total number of iterations through both loops in
Algorithm \ref{alg:1}.

\begin{Algorithm}[ht]
\small
\begin{algorithm}[H]
	\SetKwRepeat{dowhile}{do}{while}
	\KwData{$n=0;\; \lambda=0;\;
	\sigma_n = \sldva{_{,n}} = \delta_1;\;
	\phi_n = -\delta_2;$}
	\SetKwBlock{MyBlock}{}{}
	\dowhile{$\lambda < 1$}
	{
	    $s_n = 2$\; 
	    \dowhile{$|\nabla_{\sldva}\tab{_{,n}}|^2 +
|\nabla_{\phi}\tab{_{,n}}|^2 > \tau_1^2$ {\rm and} $s_n>\tau_2$}
	    {
	    \emph{Compute the derivatives:}
	    \MyBlock{
	    $\sigma_n \longrightarrow \mbox{direct problem }\refe{eq:MITweak}
\longrightarrow A_n$\;
	    $(\sigma_n, A_n)  \longrightarrow \mbox{adjoint problem
}\refe{eq:MITadjoint} \longrightarrow Z_n$\;
	    $(A_n, Z_n)  \longrightarrow \mbox{cost functional derivative }
\refe{eq:MITfunctional_derivative} \longrightarrow \nabla_\sigma \fid_n$\;
	    $\nabla_\sigma\fid_n + \refe{eq:nsldvafid} + \refe{eq:nsldvaruw}
\longrightarrow \nabla_{\sldva} \tab{_{,n}}$\;
	    $\nabla_\sigma\fid_n + \refe{eq:nphifid} + \refe{eq:nphiruw}
\longrightarrow \nabla_{\phi} \tab{_{,n}}$\;}
	    \emph{Find the optimal step:}
	    \MyBlock{
	    $s_n = \mbox{Linesearch}(\sigma_n,\nabla_{\sldva}\tab{_{,n}},$
$\nabla_{\phi}\tab{_{,n}})$\;}
	    \vspace{-0.5cm}
	    \emph{Update $\sigma_n$:}
	    \MyBlock{
	    $\sldva{_{, n+1}} = \sldva{_{, n}}-s_n \nabla_{\sldva}
\tab{_{,n}}$\;
	    $\phi_{n+1} = \phi_n-s_n\nabla_{\phi} \tab{_{,n}}$\;
	    $\sigma_{n+1} = \lambda\spc(\phi_{n+1}) + (1-\lambda)\sldva{_{,
n+1}}$\;}
	    $n =n+1$\;
	    }
	    $\lambda = \lambda + \Delta\lambda$\;
	}
\end{algorithm}
\normalsize
\caption{Topology-to-shape continuation algorithm \label{alg:1}}
\end{Algorithm}

We use the level set method  \cite{Osher1988} to parametrize the
conductivity $\spc$ introduced in \refe{eq:parameter_PC}. One first
defines the level set function $\phi$ for the subset $D \subset \domain$ with
its boundary $\partial D$
\begin{equation*}
  \phi(x) = \left\{ 
  \begin{array}{l l }
    \mbox{distance}(x, \partial D) & x \in D,\\
    -\mbox{distance}(x, \partial D) & x \in \domain/D.\\
  \end{array} \right.
\end{equation*}
The zero level set of $\phi$ represents the boundary of $D$ (its "interface").
The piecewise-constant conductivity $\spc$ is then parametrized as
\begin{equation}\label{eq:sigmaLS}
\spc(\phi) = \sigma_1 H(\phi) + \sigma_2 (1-H(\phi)),
\end{equation}
where $H$ stands for the unit step Heaviside function.
We use the following smooth approximations of $H$ and its derivative:
\begin{equation}\label{eq:Heps}
  H_\varepsilon(\phi) = \frac{1}{\pi}\arctan\frac{\phi}{\varepsilon} +
\half,\qquad 
H'_\varepsilon(\phi)=\delta_\varepsilon(\phi)=\frac{\varepsilon}{
\pi(\phi^2+\varepsilon^2)}.
\end{equation}

The gradient of \refe{eq:ruw_mit} with respect to $\spc$ is evaluated as the
solution of the variational problem
\begin{equation}\label{eq:BVprojection}
\scal{\nabla_{\spc}\ruw}{h} = \lambda\alpha\left[\scal{\frac{\nabla
\spc}{\sqrt{|\nabla \spc|^2 +\varepsilon}}}{\nabla h} +\scal{\spc}{h}\right]
\end{equation}
for all $h\in H_0^1(\Omega)$. It is, in fact, a projection of 
$\partial_{\spc}\ruw$ onto the nodes of the finite element mesh.
We remark that all the variational problems (\refe{eq:MITweak},
\refe{eq:MITadjoint} etc.) are solved by finite element method where
$H^1(\Omega)$ is approximated by
linear Lagrange basis functions. Using \refe{eq:sigmaLS} together with
\refe{eq:Heps} we have 
\begin{equation}\label{eq:nphiruw}
\nabla_{\phi}\ruw = (\sigma_1 -\sigma_2)H'_\epsilon(\phi)\nabla_{\spc}\ruw.
\end{equation}
The gradient of \refe{eq:ruw_mit} with respect to $\sldva$ is simply
\begin{equation}\label{eq:nsldvaruw}
\nabla_{\sldva}\ruw = 2(1-\lambda)\beta\sldva.
\end{equation}
The gradient $\nabla_\sigma \fid$ of the fidelity term $\fid$ with respect to
$\sigma$ is evaluated from
\refe{eq:MITfunctional_derivative} again by projection onto the nodes of the
finite element mesh as in \refe{eq:BVprojection}:
\begin{equation*}
\nabla_\sigma \fid = 2\Re[i\omega AZ].
\end{equation*}
This yields 
\begin{equation}\label{eq:nsldvafid}
\nabla_{\sldva}\fid = (1-\lambda)\nabla_\sigma \fid
\end{equation}
and 
\begin{equation}\label{eq:nphifid}
\nabla_{\phi}\fid = \lambda(\sigma_1
-\sigma_2)H'_\epsilon(\phi)\nabla_\sigma\fid.
\end{equation}

We do not optimize with respect to the constants $\sigma_1$ and $\sigma_2$,
which we consider to be known. However, Algorithm \ref{alg:1} is easily
extendable to the
case of unknown $\sigma_1$ and $\sigma_2$.

We emphasize that we do not assume any a priori knowledge about the shape of
$D.$ The unknowns $\phi$ and $\sldva$ are initiated as $\phi=-\delta_1$ and
$\sldva = \delta_2$ with $\delta_1$ and $\delta_2$ being some positive
constants, $\delta_2\approx\smin$. It means that initially ($\lambda=0$) the
whole domain $\Omega$ is occupied by a weak phase. In addition  we have zero
inclusion $D$ and thus the value of $\spc$ is $\sigma_2$ in the whole domain.

In Algorithm \ref{alg:1} the search for an optimal step-size $s_n$ might be the
most
time-consuming part, since the Linesearch-algorithm detects the optimal $s_n$ by
the evaluation of the cost functional for different intermediate values of $s_n$
and one such evaluation means to solve one forward problem \ref{eq:MITweak}.
However, we do not need to find the optimal value of $s_n$ for which the drop of
$\tab$ is maximal. It is enough to find one value for which $\tab$ drops
sufficiently (the method is then no more steepest descent). We update $s$
according to the following simple rule \cite{Cimrak2010}:
\[
s_{n+1} = 2s_n \mbox{ if } \tab(\sigma_{n}(s_{n−2})) < \tab (\sigma_{n−1}),
\]
i.e. when $s_{n−1} := s_{n−2}$ gave a reduction of cost functional value, we try
double the step. If in the next step $s_n$ does not give a descent, we take the
step with the smallest $k$ from the sequence $s^k_n = s^{k-1}_n/2,$  $k =
1,\dots,\infty$ such that we have descent.
The last part is the actual update process. The inner cycle of Algorithm
\ref{alg:1} stops
when the norm of gradient is sufficiently small ($\le \tau_1$) or the computed
gradient is not a descent direction anymore, i.e. $s_n \le \tau_2$, where
$\tau_1$ and $\tau_2$ are suitable constants.


\subsection{Numerical experiments}\label{sec:numer_exp}

In all the experiments we use synthetic data. The number $N$ of the
measurements for every experiment corresponds to
the number of excitation coils $N(e)$ (see Figure \ref{fig:MIT}) multiplied
with the number of excitation frequencies $N(\omega)$. The fidelity functional
reads
\begin{equation}\label{eq:MITfidelity2}
    \fid(\sigma) = \sum_{\omega}\sum_{e}\int_\bndry |\im(\sigma, \omega, e)-m|^2
\dS.
\end{equation}

We take $\sigma_1 = 20 S\cdot m^{-1}$ and $\sigma_2 = 2 S\cdot m^{-1}$ and
$\mu=\mu_0$ which complies with physiological conditions. For comparison, in
non-destructive testing of metallic pieces normal magnitudes of $\sigma$ are in
millions of $S\cdot m^{-1}$ and $\mu \gg \mu_0.$

All the excitation currents $e_i = 1 A\cdot m^{-1},$ $i=1,\dots, N(e)$. The
angular excitation frequencies $\omega_i = 2\pi f_i = 2\pi 2^{15+i},$
$i=0,\dots, N(\omega)-1.$ The basic frequency $f_0 = 2^{15}$ is set so that
$\mu^{-1}>\omega_0 \max(\sigma_1, \sigma_2).$ For such a base frequency the
magnetic phenomena dominate the electric ones.

The parameters in Algorithm \ref{alg:1} are $\tau_1 = 10^{-5}$, $\tau_1 =
10^{-6},$ $\delta_1 = 1$, $\delta_2 = 0.01.$ We implemented the algorithm in
FreeFem++ \cite{Hecht2009}. In all the experiments for both $\sldva$ and
$\phi$ we use identical fixed regular meshes with homogeneous division of the
boundary $\Gamma$. We also always consider $28$ excitation coils, i.e.
$N(e)=28,$ and the regularization parameters $\alpha$ and $\beta$ are fixed as
$\alpha=\beta=0.00001.$  In \refe{eq:Heps} we take $\epsilon = h^2, $ where $h$
is the diameter of the finite element mesh. If not stated otherwise we take
$\Delta\lambda=0.1.$

\begin{figure}
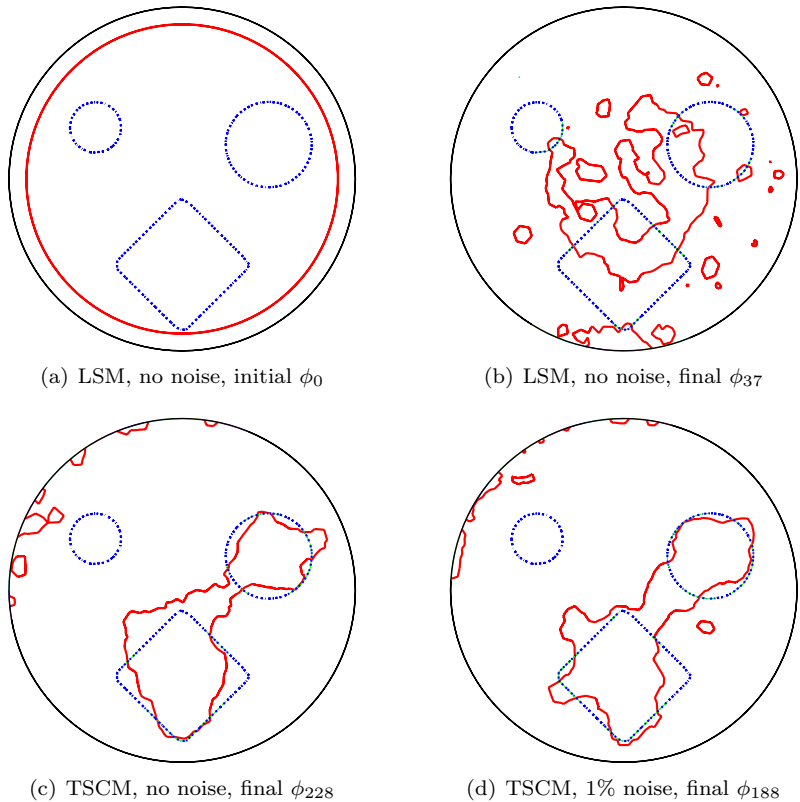

\begin{center}
  \subfigure[LSM, no noise, initial
$\phi_0$]{\includegraphics[width=0.35\columnwidth]{%
3obj0pernoiseLSFinit}}
  \hspace{1cm}
  \subfigure[LSM, no noise, final $\phi_{37}$]{%
\includegraphics[width=0.35\columnwidth]{%
3obj0pernoise1fclassiclsm}}
  \subfigure[TSCM, no noise, final $\phi_{228}$]{%
\includegraphics[width=0.35\columnwidth]{%
3obj0pernoise1fiter228}}
  \hspace{1cm}
  \subfigure[TSCM, $1\%$ noise, final $\phi_{188}$]{%
  \includegraphics[width=0.35\columnwidth]{%
3obj1pernoise1fiter188}}
  \caption{Comparision between the standard LSM and
TSCM{\label{fig:tscm_vs_lsm}}}
\end{center}
\end{figure}

We first compare the performance of the continuation algorithm (TSCM) and
the standard level set method (LSM) on an example with a
non-trivial topology (Figure \ref{fig:tscm_vs_lsm}). The
blue dotted line represents in all the figures the exact phantom and the red
line is the numerical approximation. The initial shape of $\spc$ for the
standard LSM is depicted in Figure \ref{fig:tscm_vs_lsm}(a). Figure
\ref{fig:tscm_vs_lsm} displays the results for
the base angular frequency $\omega_0$. The LSM in Figure
\ref{fig:tscm_vs_lsm}(a) ended up in a local minimum after $37$ iterations. The
algorithm stopped because
the computed gradient was not a descent direction anymore, i.e. $s_{37} <
\tau_2$. We see that without a proper initial guess, the standard LSM failed to
recover the desired shape. On the other hand, the TSCM in Figure
\ref{fig:tscm_vs_lsm}(c) for zero noise provided a decent
approximation. Both bigger phantoms are recovered quite successfully but
they stay connected. The smallest phantom is not identified properly. Only
certain allocation of its mass is identified along the proximal boundary.
Even for $1\%$ noise the TSCM method provided a decent approximation (Figure
\ref{fig:tscm_vs_lsm}(d)). The method seems to be rather stable with respect to
noise. We recall, that the standard LSM is very sensitive when only boundary
measurements are available, e.g. in \cite[Figure 7]{Chung2005} only a
noise level of $0.01\%$ is considered in a case of a complicated phantom for the
problem of electric impedance tomography.

We next perform numerical experiments that use explicit dependency of MIT model
on the frequency $\omega$. The results are presented in Figure
\ref{fig:multifreq3balls} for the phantom identical to the previous
single-frequency experiment in Figure \ref{fig:tscm_vs_lsm}. We consider the
four-frequency case $N(\omega)= 4$ and four levels of noise: $1\%$, $5\%$,
$10\%$ and $20\%$. The blue line is again the exact shape and the red line is
its TSCM-identification. As expected we got
more accurate recovery of $\spc$. For the noise levels up to $10\%$ all the
components of the phantom are quite accurately identified, accuracy gradually
decreasing. Even for noise level of $20\%,$ the identification is surprisingly
accurate and all the components are identified, however two bigger components
stay connected by a bridge. This experiment confirms our
conjecture that the method is very stable with respect to the non-systematic
noise.

\begin{figure}[ht]
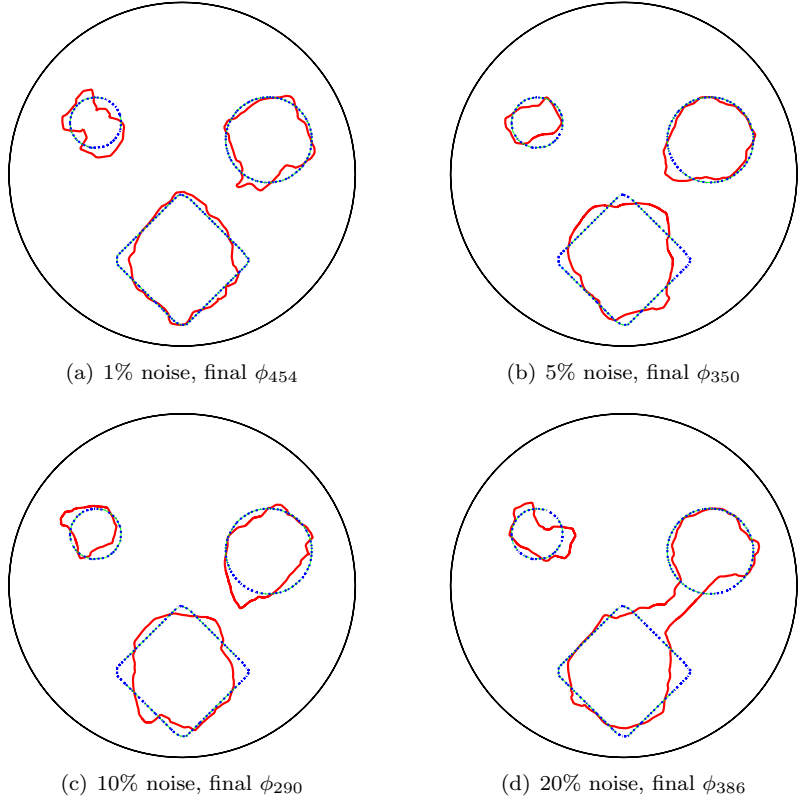

\begin{center}
  \subfigure[$1\%$ noise, final
$\phi_{454}$]{\includegraphics[width=0.35\columnwidth]{%
3obj1pernoise4fiter454}}
  \hspace{1cm}
  \subfigure[$5\%$ noise, final
$\phi_{350}$]{\includegraphics[width=0.35\columnwidth]{%
3obj5pernoise4fiter350}}
  \subfigure[$10\%$ noise, final
$\phi_{290}$]{\includegraphics[width=0.35\columnwidth]{%
3obj10pernoise4fiter290}}
  \hspace{1cm}
  \subfigure[$20\%$ noise, final
$\phi_{386}$]{\includegraphics[width=0.35\columnwidth]{%
3obj20pernoise4fiter386}}
  \caption{TSCM: multiple frequency case %
$N(\omega)=4${\label{fig:multifreq3balls}}}
\end{center}
\end{figure}

Noise causes non-convexity of the fidelity term $\fid$ regardless the
properties of the forward operator $\F$. Provided the data contain sufficient
information to identify the phantom, the TSCM is able to eliminate this type of
non-convexity. We are convinced the reason lies within the nature of the method.
The TSCM is essentially a convexification approach.

The convergences of the fidelity term $\fid(\sigma)$ and of the relative error
between the computed conductivity $\sigma_{TSCM}$ and exact conductivity
$\sigma_{exact}$
\begin{equation}\label{eg:relerrorsigma}
e(\sigma) = \frac{\norm{\sigma_{TSCM} -
\sigma_{exact}}{\Ldva}}{\norm{\sigma_{exact}}{\Ldva}}
\end{equation}
with respect to the total number of iterations $n$ of Algorithm \ref{alg:1} are
depicted in Figure \ref{fig:minim}(a). These graphs correspond to the
experiment 
of Figure \ref{fig:multifreq3balls}(a). The distribution of the number of
iterations for different $\lambda-$steps is depicted in Figure
\ref{fig:minim}(b). In general, the first iteration of the TSCM for $\lambda=0$
is the most time consuming, which is natural, because it is nothing else than
the minimization of $\tab$ in the space $L^2(\domain)$. It provides the
information about ``the optimal topology'' for $\spc.$ Once this good initial
guess is found, the continuation method rather quickly transforms this function
to the desired piecewise-constant conductivity $\spc$.

\begin{figure}[ht]
\begin{center}
  \subfigure[Convergence]{\includegraphics[width=0.45\columnwidth]{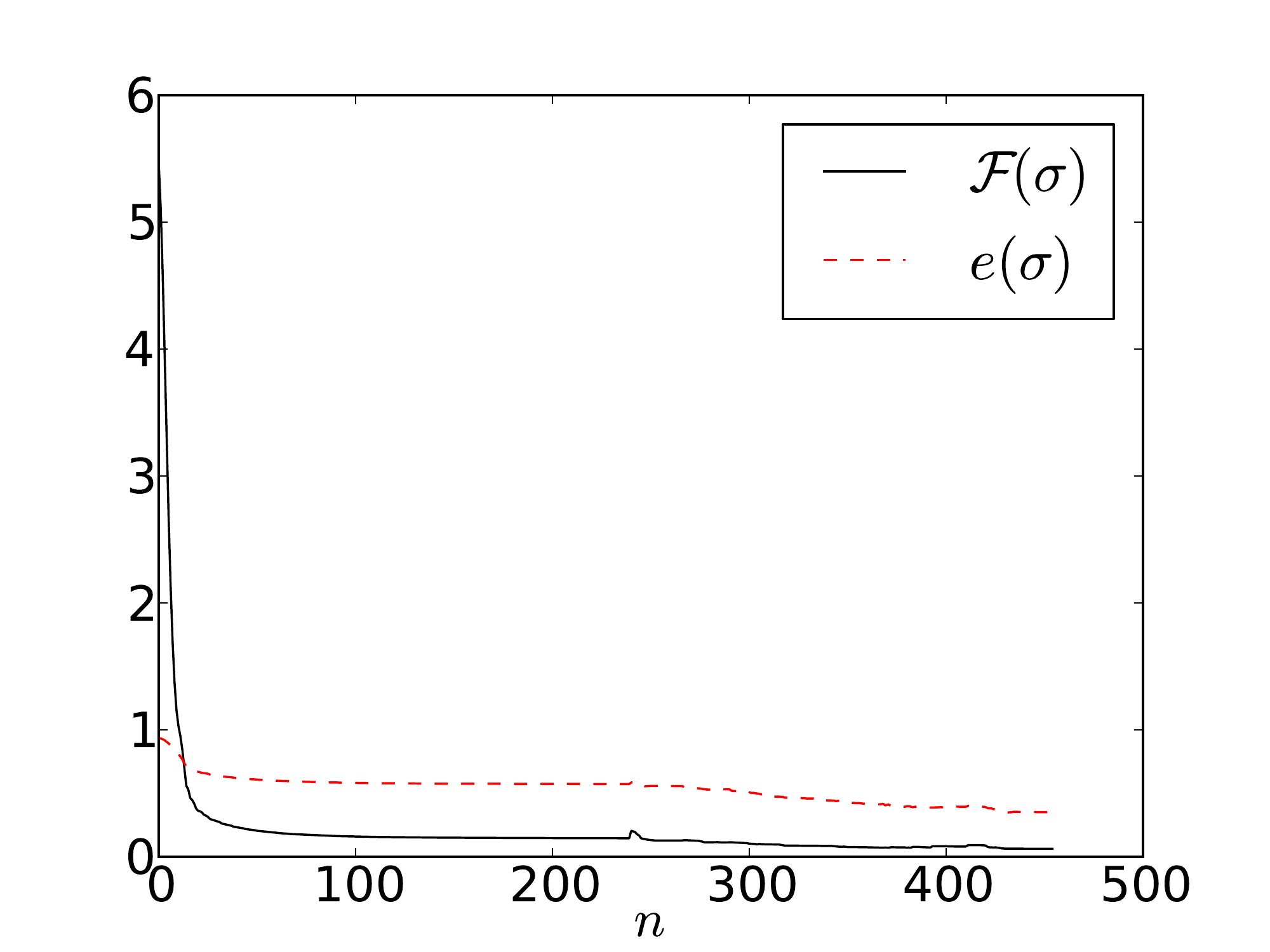}}
  \subfigure[Distribution of
iterations]{\includegraphics[width=0.45\columnwidth]{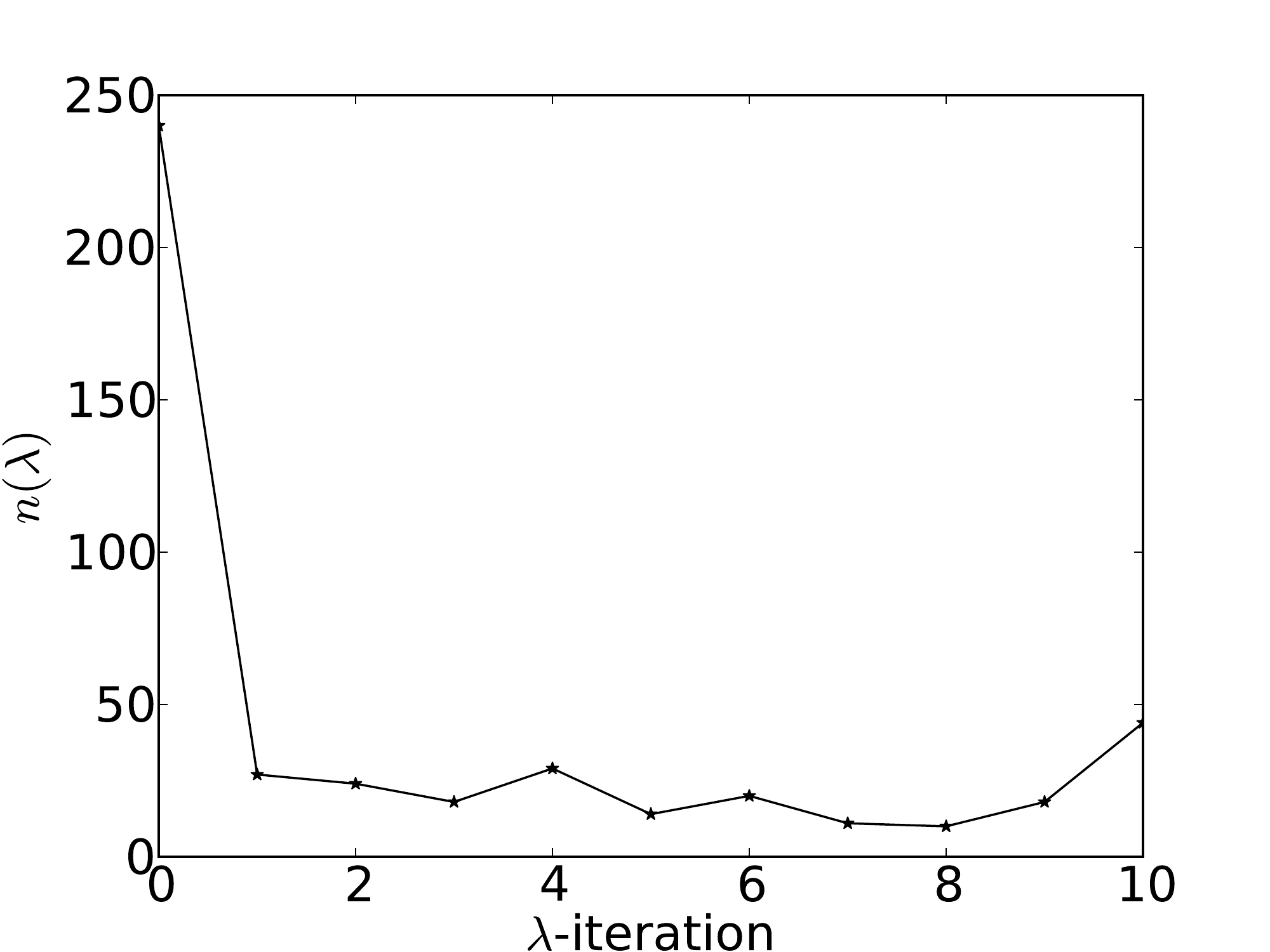}}
  \caption{TSCM: experiment 1; $\rho = 1\%$; $N(\omega) = 4${\label{fig:minim}}}
\end{center}
\end{figure}

Next, we consider a more complicated phantom with its two components
touching and one of them being a torus. We again consider four excitation
frequencies $N(\omega)=4.$ The results for two level of noise, $1\%$ and $10\%$,
are depicted in Figure \ref{fig:multifreqTorus}. Again, we obtained a decent
reconstruction even for $10\%$ noise. Except the outside boundary also the hole
of the torus is well identified. The less resolved regions are those where
the components are touching and the center of the domain.

\begin{figure}[ht]
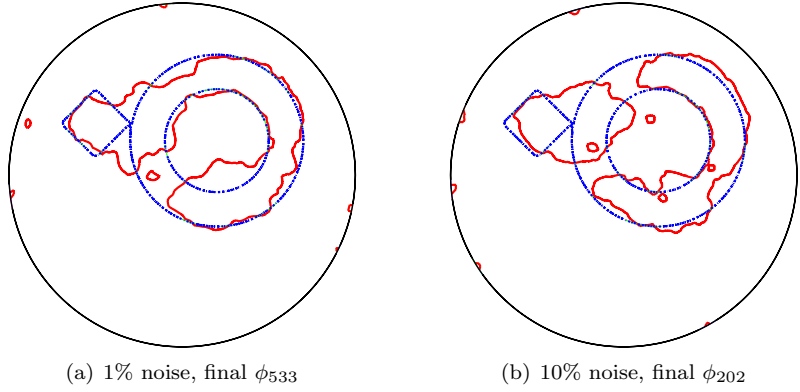

\begin{center}
  \subfigure[$1\%$ noise, final
$\phi_{533}$]{\includegraphics[width=0.35\columnwidth]{%
2objTorus1pernoise4fiter533}}
  \hspace{1cm}
  \subfigure[$10\%$ noise, final
$\phi_{202}$]{\includegraphics[width=0.35\columnwidth]{%
2objTorus10pernoise4fiter202}}
  \caption{TSCM: experiment 2; $\rho = 1\%$; $N(\omega) =%
4${\label{fig:multifreqTorus}}}
\end{center}
\end{figure}

Last, we examine the behavior of the TSCM regarding $\Delta \lambda$, i.e.
regarding the number of $\lambda-$iterations $N(\lambda)$. We take the noise
level of $1\%$ and $N(\omega)=2.$ In Figure \ref{fig:dependencyonlambda}(a) the
total number of iterations $n$ of Algorithm \ref{alg:1} and in Figure
\ref{fig:dependencyonlambda}(b) the corresponding relative error of the
conductivity $e(\sigma)$ are plotted against $ln(N(\lambda)).$ We see that $n$
shows tendency to grow and $e(\sigma)$ tendency to decrease. The results are
obtained from a single-problem sample for each $N(\lambda)$. In Figure
\ref{fig:dependencyonlambdaexamples} two particular examples are presented for
$N(\lambda)=2$ and for $N(\lambda)=4.$ We see that to correctly identify the
shape and particularly its topology, it is necessary to consider at least
$N(\lambda)=4$. The continuation method has to be allowed to perform a
sufficient number of steps to shift the information from $\sldva$ to $\spc$,
i.e. the process has to be sufficiently continuous.

\begin{figure}[ht]
\begin{center}
  \subfigure[Number of
iterations]{\includegraphics[width=0.45\columnwidth]{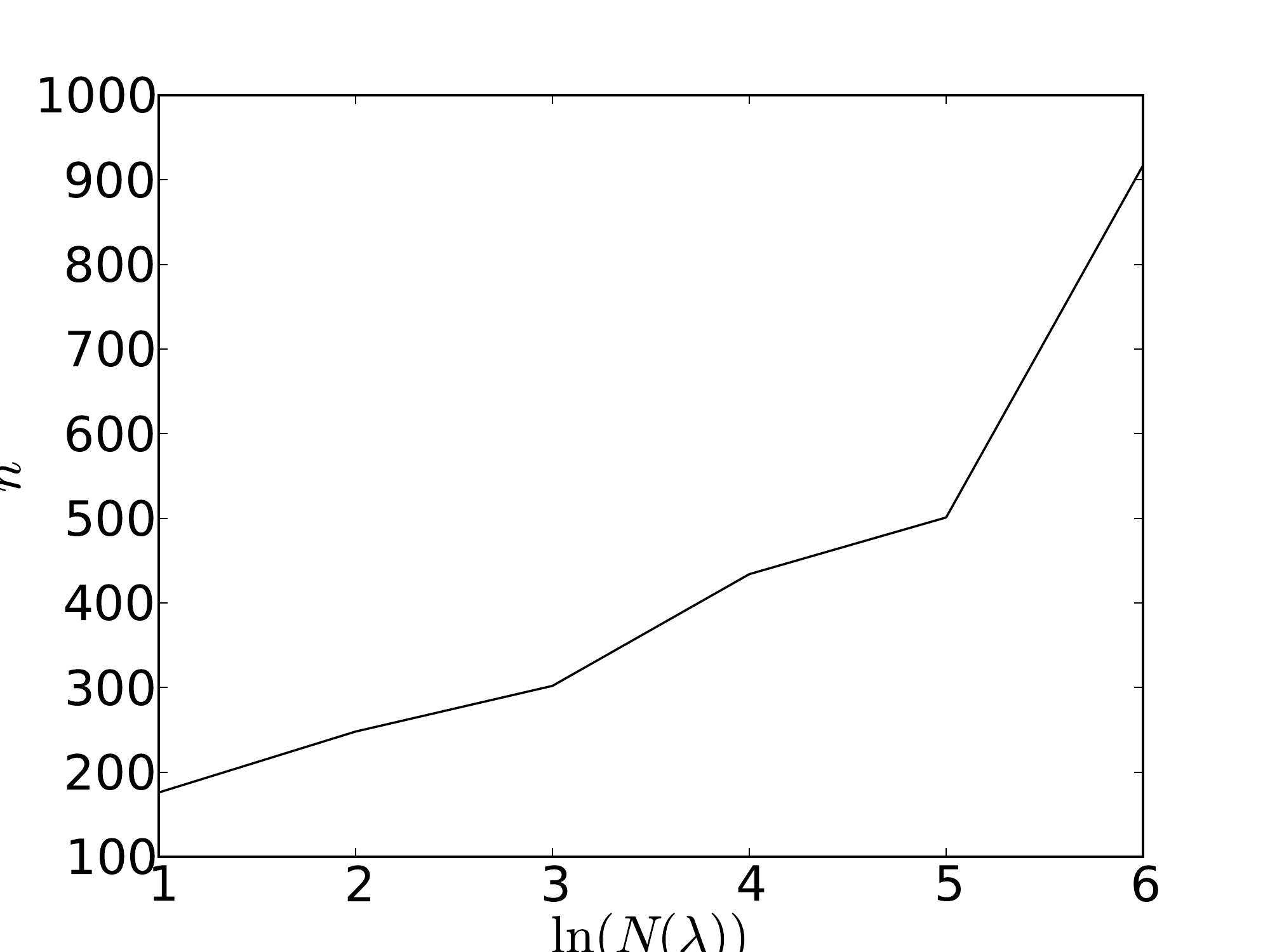}}
  \subfigure[Relative error of
sigma]{\includegraphics[width=0.45\columnwidth]{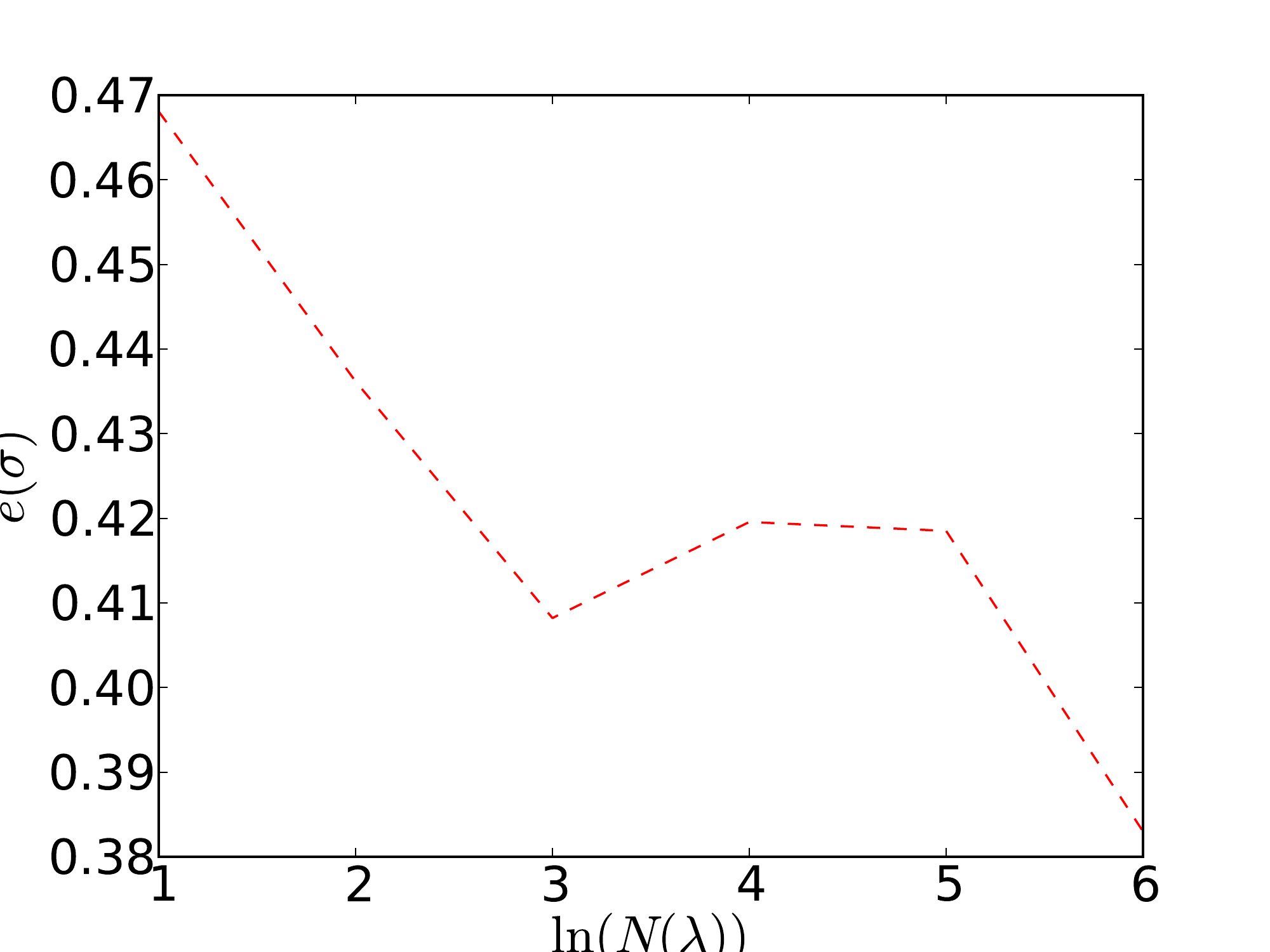}}
  \caption{TSCM: dependency on $\Delta \lambda$; $\rho = 1\%$; $N(\omega) =
2${\label{fig:dependencyonlambda}}}
\end{center}
\end{figure}

\begin{figure}[ht]
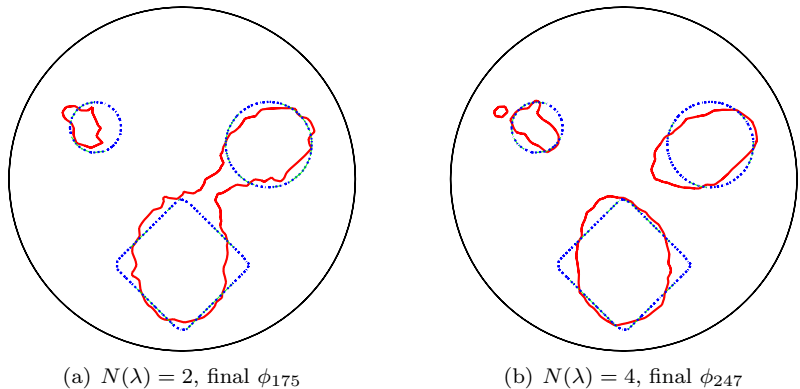

\begin{center}
  \subfigure[$N(\lambda)=2$, final
$\phi_{175}$]{\includegraphics[width=0.35\columnwidth]{%
3obj1pernoise2fNlamnda2Iter175}}
\hspace{1cm}
  \subfigure[$N(\lambda)=4$, final
$\phi_{247}$]{\includegraphics[width=0.35\columnwidth]{%
3obj1pernoise2fNlamnda4Iter247}}
  \caption{TSCM: dependency on $\Delta \lambda$; $\rho = 1\%$; $N(\omega) =
2${\label{fig:dependencyonlambdaexamples}}}
\end{center}
\end{figure}

\section{Conclusions}
In this paper we have presented a continuation approach for Tikhonov
regularization and employed it to perform shape identification without any
initial knowledge of topology. We have successfully applied the resulting
topology-to-shape continuation method (TSCM) to a magnetic
induction tomography (MIT) problem.

This method appears to be a very promising candidate for an ultimate framework 
unifying both topology and shape sensitivities. To establish such a claim
more rigorously, it is necessary to provide a deeper analysis of the
continuation approach with respect to the homotopy parameter $\lambda$, which is
a possible future work. Any result in this direction will be dependent on a
particular choice of the functional spaces $W$ and $U$ and their properties.
Our understanding of the underlying concepts suggests that for
TSCM-specific choice of the functional spaces such an analysis is attainable.

In this paper we have provided more or less standard results on well-posedness,
stability ad convergence of the framework. Under a strong condition of
uniqueness, we have provided a local correctness result of the continuation
approach (Theorem \ref{thm:main}).

The numerical results of the TSCM for multiple-frequency MIT show decent
accuracy and above all excellent stability of the reconstruction with respect to
noise. It suggest that generalization to multiple-valued piecewise-constant
parameters scenario is reasonable and should be fairly straightforward. As
already known for MIT, simultaneous reconstruction of both conductivity and
permittivity is possible. Altogether, the MIT with the TSCM as a solver could be
used as a diagnostic method.

Possible future work with respect to the TSCM or to the continuation
approach in general is to propose and analyze appropriate parameter choice rules
(PCRs) for the two regularization parameters $\alpha$ and $\beta$ in
\refe{eq:tikhonov_functional_with_continuation}. The regularization parameters
could be considered as functions of $\lambda$ as well. This should lead to
$\lambda$-adaptive PCRs and consequently a more efficient implementation of the
TSCM algorithm. From the numerical point of view also conjugate gradient,
quasi-Newton or Gauss–Newton algorithm extensions are possible.

\section*{Acknowledgement}
Valdemar Melicher would like to acknowledge the support of the BOF
doctor-assistant research mandate 01P09209T of Ghent University,
Ghent, Belgium. Vladim\'ir Vr\'abe\ml\  was supported by the BOF-grant number
01D00409 of Ghent
University.

\bibliographystyle{unsrt}
\bibliography{ref}
\end{document}